 \documentclass[12pt, letterpaper]{article}

\usepackage{multirow} 
\usepackage{enumitem} 
\usepackage{amsfonts} 
\usepackage[centertags]{amsmath}
\usepackage{amssymb}
\usepackage{amsthm}
\usepackage{algorithm}
\usepackage{algorithmic}
\newtheorem{thm}{Theorem}[section]
\newtheorem{lem}[thm]{Lemma}

\newtheorem{cor}[thm]{Corollary}

\newtheorem{definition}[thm]{Definition}
\usepackage[colorinlistoftodos]{todonotes} 
\usepackage[numbers, sort&compress]{natbib} 

\title{Piecewise M-Stationarity and Related Algorithms for  Mathematical Programs with Complementarity Constraints}

	\author{Kexin Wang  
\thanks{College of Control Science and Engineering, Zhejiang University, Hangzhou, Zhejiang 310027, China, kxwang@zju.edu.cn}
and Lorenz T. Biegler
\thanks{Department of Chemical Engineering, Carnegie Mellon
  University, 5000 Forbes Ave, Pittsburgh, PA 15213, USA,
  lb01@andrew.cmu.edu (corresponding author)}}

\begin{document}
\maketitle

\begin{abstract}
This study explores B-stationarity of mathematical programs with
complementarity constraints (MPCCs)
and convergence behavior of MPCC algorithms. 
Special attention is given to the cases
with biactive complementarity constraints.
First, we propose the concept of piecewise M-stationarity and prove its
equivalence to B-stationarity under MPCC-GCQ.
Then, we investigate convergence properties of
the NCP-based bounding methods we proposed in \cite{bounding}, but under the weaker MPCC-MFCQ.
An interpretation of the algorithm's behavior
together with the concept of piecewise 
M-stationarity leads to a cost reduction in B-stationarity verification.
In addition, practical issues related to convergence to
non-strongly stationary solutions are discussed, which shows that
the NCP-based complementarity reformulations have an advantage in avoiding unbounded multipliers near these solutions.
\end{abstract}

\noindent
{\bf Keywords:} Complementarity constraints; MPCCs; B-stationarity; Stationarity  conditions; Constraint qualifications; Smoothed NCP functions
 

\section{Introduction}

We consider mathematical programs with complementarity constraints
(MPCCs) of the form 
\begin{equation} \label{mpcc}
\begin{aligned}
\min \quad &f(z) \\
{\rm s.t.} \quad &g(z) \leq 0 ,\\
~ &h(z) = 0 ,\\ 
~ &0 \leq G(z) \perp H(z) \geq 0, 
\end{aligned}
\end{equation}
where $(f,g,h,G,H): \mathbb{R}^n \to \mathbb{R}^{1+n_g+n_h+m+m}$ are differentiable functions.
At a feasible point $\bar z$ of the MPCC, define the following index sets: 
\begin{equation} \label{indsets}
\begin{aligned}
  I_g(\bar z) &= \{i \,|\, g_i(\bar z) = 0\},\\
  I_h(\bar z) &= \{i \,|\, i = 1, \dots, n_h\},\\
  \alpha(\bar z) &= \{i \,|\, G_i(\bar z) = 0, H_i(\bar z) > 0\},\\
  \gamma(\bar z) &= \{i \,|\, G_i(\bar z) > 0, H_i(\bar z) = 0\},\\
  \beta(\bar z) &= \{i \,|\, G_i(\bar z) = 0,  H_i(\bar z) = 0\}.
\end{aligned}
\end{equation}
A feasible point $\bar z$ is weakly stationary, if there exist
multipliers $\bar \lambda =
(\bar\lambda^g,\bar\lambda^h,\bar\lambda^G,\bar\lambda^H)$ with
$\bar\lambda^g \geq 0$, 
such that
\begin{multline} \label{wstat}
0 = \nabla f(\bar z) 
 + \sum_{i \in I_g(\bar z)} \bar\lambda^g_i \nabla g_i(\bar z)
 + \sum_{i \in I_h(\bar z)} \bar\lambda_i^h \nabla h_i(\bar z) \\
 - \sum_{i \in \alpha(\bar z) \cup \beta(\bar z)} \bar\lambda^G_i
 \nabla G_i(\bar z) 
 - \sum_{i \in \gamma(\bar z) \cup \beta(\bar z)} \bar\lambda^H_i
 \nabla H_i(\bar z).
\end{multline}
Further, a weakly stationary point $\bar z$ is also
\begin{itemize}
  \item
    S-stationary (strongly stationary), if
    $\bar\lambda_i^G,\bar\lambda_i^H \geq 0$ for all 
    $i \in \beta(\bar z)$;
  \item
    M-stationary, if either
    $\bar\lambda_i^G,\bar\lambda_i^H \geq 0$ or $\bar\lambda_i^G
    \bar\lambda_i^H = 0$ for all $i \in \beta(\bar z)$;
  \item
    C-stationary, if $\bar\lambda_i^G\bar\lambda_i^H \geq 0$ for all
    $i \in \beta(\bar z)$;
  \item
    A-Stationary, if either
    $\bar\lambda_i^G \geq 0$ or $\bar\lambda_i^H \geq 0$
    for all $i \in \beta(\bar z)$.
\end{itemize}

\subsection{First-Order Optimality Condition and Constraint Qualifications}

A local minimizer $\bar z$ of MPCC (\ref{mpcc}) is a
B-stationary point at which the following condition holds
\begin{equation} \label{bstat}
  \nabla f(\bar z)^Td \geq 0, \quad\forall d \in \mathcal T(\bar z),
\end{equation}
where $\mathcal T(\bar z)$ is the tangent cone of the MPCC at the
point $\bar z$.
This condition is the same as
\begin{equation} \label{bstat-d}
  \nabla f(\bar z) \in \mathcal T(\bar z)^{*},
\end{equation}
where $\mathcal T(\bar z)^{*}$ is the dual cone of $\mathcal T(\bar
z)$ (see also \cite[Theorem 6.12]{rockafellar-wets}).
Without an analytic expression of the tangent cone, 
verifying these conditions directly is generally
nontrivial.
In practice, it is desirable to employ linearized
cones to reconstruct the first-order condition (\ref{bstat}) or
(\ref{bstat-d}). Constraint 
qualifications (CQs) play an important role in this task.

Standard linearization of $\mathcal T(\bar z)$ can be carried out (see
\cite[Eqs. (10)-(11)]{mpec-acq}), by 
replacing the complementarity constraints $0 \leq G(z) \perp H(z) \geq
0$ with
\begin{equation*}
G(z) \geq 0,\quad H(z) \geq 0,\quad G(z)^TH(z) = 0.
\end{equation*}
Linearizing these constraints gives that
\begin{equation*}
\begin{aligned}
    G_i(\bar z) + \nabla G_i(\bar z)^Td \geq 0, \quad &i = 1, \dots, m,\\
    H_i(\bar z) + \nabla H_i(\bar z)^Td \geq 0, \quad &i = 1, \dots, m,\\
    G_i(\bar z)H_i(\bar z) + H_i(\bar z)\nabla G_i(\bar z)^Td + G_i(\bar z)\nabla H_i(\bar z)^Td = 0,
                            \quad &i = 1, \dots, m,
\end{aligned}
\end{equation*}
which, together with the index
sets in (\ref{indsets}), gives the following linearized tangent cone:
\begin{equation*}
\begin{aligned}
    \mathcal T^{\rm lin}(\bar z) = \{ d \,|
      & \nabla g_i(\bar z)^Td \leq 0, &&\forall i \in I_g(\bar z),\\
    ~ & \nabla h_i(\bar z)^Td = 0, &&\forall i \in I_h(\bar z),\\
    ~ & \nabla G_i(\bar z)^Td = 0, &&\forall i \in \alpha(\bar z),\\
    ~ & \nabla H_i(\bar z)^Td = 0, &&\forall i \in \gamma(\bar z),\\
    ~ & \nabla G_i(\bar z)^Td \geq 0, \; \nabla H_i(\bar z)^Td \geq 0,
      &&\forall i \in \beta(\bar z) \}.
\end{aligned}
\end{equation*}
Its dual cone is given by
\begin{equation*}
\begin{aligned}
  \mathcal T^{\rm lin}(\bar z)^{*}
  &= \{ w \,|\, w^T d \geq 0,
     \; \forall d \in \mathcal T^{\rm lin}(\bar z) \}\\
  &\begin{aligned}\;= \{ w \,|\,
  0&= w
    + \nabla g_I(\bar z)\eta_I^g 
    + \nabla h(\bar z) \eta^h
    - \eta_{\alpha}^G \nabla G_{\alpha}(\bar z)
    - \eta_{\gamma}^H \nabla H_{\gamma}(\bar z) \\
  ~&- \eta_{\beta}^G \nabla G_{\beta}(\bar z)
    - \eta_{\beta}^H \nabla H_{\beta}(\bar z),
    \; \eta_I^g, \eta_{\beta}^G, \eta_{\beta}^H \geq 0\} ,
  \end{aligned} 
\end{aligned}
\end{equation*}
where
$g_I$ denotes the constraints $\{g_i \,|\, \forall i \in
I_g(\bar z)\}$, and similarly, $G_{\alpha}, H_{\gamma}, G_{\beta}$, and
$H_{\beta}$ denote the constraints related to the index sets
$\alpha(\bar z), \gamma(\bar z)$, and $\beta(\bar z)$.
The standard Abadie and Guignard
constraint qualifications, NLP-ACQ and NLP-GCQ, assume
$\mathcal T(\bar z) = \mathcal T^{\rm lin}(\bar
z)$ and $\mathcal T(\bar z)^{*} = \mathcal T^{\rm lin}(\bar
z)^{*}$, respectively, so that
the conditions (\ref{bstat}) and (\ref{bstat-d}) can be rebuilt based on the
linearized cone.
This converts the first-order stationarity of MPCC (\ref{mpcc}) into that of
the relaxed NLP 
\begin{equation}\label{rnlp}
\begin{aligned}
{\rm RNLP}: \quad \min \quad &f(z) \\
{\rm s.t.} \quad &g(z) \leq 0, \\
  ~ &h(z) = 0,\\
  ~ &G_i(z) = 0, &&i \in \alpha(\bar z),\\
  ~ &H_i(z) = 0, &&i \in \gamma(\bar z),\\
  ~ &G_i(z) \geq 0, \; H_i(z) \geq 0, &&i \in \beta(\bar z),
\end{aligned}
\end{equation}
and thus justifies using the KKT
conditions for RNLP, i.e., the S-stationarity condition, 
as a necessary first-order condition for the MPCC (see also
\cite[Theorem 4.1]{gcq-in-mpcc}).
We should note that NLP-ACQ cannot hold if $\beta(\bar z) \neq
\emptyset$, because $\mathcal T(\bar z)$ is a nonconvex cone in this case
while $\mathcal T^{\rm lin}(\bar z)$ is always a convex polyhedral cone.

Since NLP-CQs are usually too restrictive and not expected to hold in
the presence of complementarity constraints,
constraint qualifications tailored to MPCCs
have been proposed.  
MPCC-ACQ \cite{mpec-acq} and MPCC-GCQ \cite{mstat-proof} extend the standard Abadie and Guignard constraint qualifications for NLPs to CQs that are tailored to the geometry of MPCCs.
Instead of relating the tangent cone $\mathcal
T(\bar z)$ with $\mathcal T^{\rm lin}(\bar z)$, they relate  $\mathcal
T(\bar z)$ with the MPCC-linearized tangent cone $\mathcal T_{\rm MPCC}^{\rm
  lin}(\bar z)$, which is given by:
\begin{equation*}\label{tlin-mpcc}
\begin{aligned}
    \mathcal T_{\rm MPCC}^{\rm lin}(\bar z) = \{ d \,|
      & \nabla g_i(\bar z)^Td \leq 0, &&\forall i \in I_g(\bar z),\\
    ~ & \nabla h_i(\bar z)^Td = 0, &&\forall i \in I_h(\bar z),\\
    ~ & \nabla G_i(\bar z)^Td = 0, &&\forall i \in \alpha(\bar z),\\
    ~ & \nabla H_i(\bar z)^Td = 0, &&\forall i \in \gamma(\bar z),\\
    ~ & \nabla G_i(\bar z)^Td \geq 0, &&\forall i \in \beta(\bar z),\\
    ~ & \nabla H_i(\bar z)^Td \geq 0, &&\forall i \in \beta(\bar z),\\
    ~ & (\nabla G_i(\bar z)^Td) \cdot (\nabla H_i(\bar z)^Td) = 0,
                                   &&\forall i \in \beta(\bar z) \}.
\end{aligned}
\end{equation*}
MPCC-ACQ assumes $\mathcal T(\bar z)
= \mathcal T_{\rm MPCC}^{\rm lin}(\bar z)$, 
then the condition (\ref{bstat}) can be expressed as:
\begin{equation*}
\nabla f(\bar z)^T d \geq 0, \quad\forall d \in \mathcal T_{\rm
  MPCC}^{\rm lin}(\bar z).
\end{equation*}
MPCC-GCQ assumes
$\mathcal T(\bar 
z)^{*} = \mathcal T_{\rm MPCC}^{\rm lin}(\bar z)^{*}$,
where
\begin{equation*}
  \mathcal T_{\rm MPCC}^{\rm lin}(\bar z)^{*}
   = \{ w \,| w^T d \geq 0, \;
      \forall d \in \mathcal T_{\rm MPCC}^{\rm lin}(\bar z) \},
\end{equation*}
then the condition (\ref{bstat-d}) can be expressed
as $\nabla f(\bar z) \in \mathcal T_{\rm MPCC}^{\rm lin}(\bar z)^{*}$.
MPCC-GCQ is implied by MPCC-ACQ, but the converse is in general not
true. Their relations are analogous to the relations between
NLP-GCQ and NLP-ACQ. Examples showing that NLP-GCQ and 
MPCC-GCQ have a better chance to be satisfied, even if NLP-ACQ and
MPCC-ACQ do not hold, can be found in \cite[Example 1.3]{lecturenote} and
\cite[Example 2.1]{mstat-proof}.
Note that despite the fact that a tangent cone is
not necessarily equal to the closure of its convex hull, their dual
cones are the same. 
This offers the
opportunity for NLP-GCQ and MPCC-GCQ to hold more generally.
It has been established that under
MPCC-GCQ, M-stationarity is a necessary
first-order condition \cite[Theorem 3.1]{mstat-proof}.
In addition,
Fritz John type M-stationarity has been derived
at a local minimizer of an MPCC without requiring a constraint qualification
\cite[Theorem 3.1]{mpecfj}.

\subsection{Degeneracy}

To seek a solution of
MPCC (\ref{mpcc}),
various NLP-based schemes have been proposed.
The original intention is to avoid dealing with the complementarity
structure explicitly. 
In general, these schemes are designed to 
solve a sequence of regularized NLPs, yielding a sequence of stationary
points $\{z^k\}$ that is hoped to approximate a solution of the MPCC. 
An important ingredient is to characterize conditions under which,
as the regularization parameter vanishes or stabilizes,
a limit point of a sequence $\{z^k\}$ is a stationary point of the
MPCC in some sense.  
For some representative work, see 
\cite{reg,fukushima-pang,lin-fukushima,IPM-for-mpcc,reform-mstat1,reform-mstat2,reform,sqp,augLag2}. 

A difficulty arises in establishing stationarity of a limit point
that is degenerate (on the lower level), namely,
a sequence $\{z^k\} \to 
\bar z$ at which $\beta(\bar z) \neq \emptyset$. 
Fukushima and Pang \cite{fukushima-pang} studied the behavior of a
sequence $\{z^k\}$ that is composed of KKT points of NLPs formulated by
smoothing the MPCC with perturbed
Fischer-Burmeister functions. The condition of
\emph{asymptotic weak nondegeneracy} was proposed, meaning that
for every $i \in \beta(\bar z)$, $G_i(z^k)$ and $H_i(z^k)$ approach
zero at the same order of magnitude.
Under this condition and
second-order necessary conditions at every $z^k$,
together with MPCC linear independence constraint qualification
(MPCC-LICQ) at $\bar z$,
it has been proved that $\bar z$ is a
B-stationary point of the MPCC \cite[Theorem
3.1]{fukushima-pang}. However, the condition of
asymptotic weak nondegeneracy is hard to enforce in practice.
Replacing this condition with upper level strict
complementarity (ULSC),
namely, $\bar \lambda_i^G \bar \lambda_i^H \neq 0$ for all $i \in
\beta(\bar z)$,
Scholtes recovered B-stationarity of a limit point of a
regularization scheme \cite[Corollary 3.4]{reg}.
Kadrani et al.
developed a regularization method whose limit points were shown to be
M-stationary under MPCC-LICQ,
and S-stationary under additional assumption of asymptotic weak
nondegeneracy \cite{reform-mstat1}.
The result on M-stationarity was later proved valid under 
weaker MPCC constant positive linear dependence
(MPCC-CPLD) assumption \cite{compare}. 
Results under weaker assumptions also include, for example, that
C-stationarity convergence of the method by Steffensen and Ulbrich
under MPCC constant rank constraint qualification (MPCC-CRCQ)
\cite{reform} and under MPCC-CPLD \cite{cpld-mpvc}, and M-stationarity 
convergence of the method by Kanzow and Schwartz under MPCC-CPLD
\cite{reform-mstat2}.
Theoretical and numerical comparison of some of these methods can be
found in \cite{compare}.

Besides diverse methods for reformulating complementarity constraints, many 
popular algorithmic frameworks in nonlinear programming have been
exploited to deal with 
complementarity as well as the potential degeneracy.
The sequential quadratic programming (SQP) methods applied to MPCCs were 
investigated in \cite{sqp}. By introducing slack
variables into the reformulation of general complementarity constraints,
superlinear convergence to a S-stationary point was
established under MPCC-LICQ and regularity conditions (Theorems 5.7
and 5.14 therein). 
An alternative SQP method that retained the superlinear convergence
while relaxing some of the assumptions was analyzed in \cite{sqp2}, where
an adaptive elastic mode was invoked to enforce either feasibility of the QP
subproblems or complementarity at the iterates (Theorems 4.5 and 4.6
therein).
Interior-penalty methods for MPCCs
were studied in \cite{IPM-for-mpcc}; global convergence to a
S-stationary point was proved under MPCC-LICQ and a
condition on the 
behavior of the penalty parameters (Theorem 3.4 and Corollary 3.5 therein);
superlinear convergence to a S-stationary point was
proved under certain regularity conditions (Theorem 4.5 therein);
in particular, relations between interior-penalty and
interior-relaxation methods were established, which allows to extend
some convergence results derived for one approach to the other.
Convergence of augmented Lagrangian methods was
investigated under MPCC-LICQ \cite[Theorem 3.2]{augLag1},
where a limit point 
was proved to be S-stationary in the case of bounded multiplier
sequence, and C-stationary in the presence of unbounded multiplier
sequences.
The results were improved in \cite{augLag2} for a second-order method
(Theorem 3.2 therein), where
S-stationarity was established under a weaker MPCC-relaxed constant
positive linear dependence (MPCC-RCPLD) condition, and convergence in the
presence of unbounded multipliers was proved to be
M-stationary under MPCC-LICQ.
Comparison of some existing augmented Lagrangian methods for MPCCs can
be found in \cite{augLag3}. 
Finally, a detailed survey on these MPCC topics can be found in Kim et
al. \cite{bilevel-opt}.

\subsection{Outline}

With a focus on the theoretically interesting case where $\beta(\bar
z) \neq \emptyset$, this paper investigates stationarity
conditions for MPCCs and analyzes convergence behavior of MPCC methods.
In Section \ref{sec:nec}, we propose a condition of \emph{piecewise
M-stationarity} and prove its equivalence to B-stationarity under MPCC-GCQ.
In Section \ref{sec:ncp}, we analyze convergence properties of the
NCP-based Bounding Algorithm we proposed in \cite{bounding} under MPCC-MFCQ.
An inequality variant of this algorithm offers an alternative
viewpoint to interpret the behavior when approaching a
non-strongly stationary solution, which together with the concept of piecewise
M-stationarity leads to a cost reduction when verifying B-stationarity 
of a limit point (even if this point is not S-stationary).
In Section \ref{sec:ncpreg}, we discuss some practical issues for MPCC
methods in their convergence to non-strongly stationary solutions.
This shows an advantage of the NCP-based complementarity reformulations,
that is, the structure of the generalized gradients of the underlying
NCP function 
can prevent unbounded multipliers near these solutions.

\section{Piecewise M-Stationarity} \label{sec:nec} 

\subsection{Definition}
Given a feasible point $\bar z$ of MPCC (\ref{mpcc}), denote the set
of all partitions of $\beta(\bar z)$ by $\mathcal 
P(\beta(\bar z)) = \{(\beta_1,\beta_2) \,|\,
\beta_1 \cup \beta_2 = \beta(\bar z),
\beta_1 \cap \beta_2 =\emptyset \}$.
The branch NLP problem on every partition $(\beta_1,\beta_2) \in \mathcal
P(\beta(\bar z))$ is defined as
\begin{equation} \label{b1b2}
\begin{aligned}
{\rm NLP}_{(\beta_1,\beta_2)}: \quad
  \min \quad &f(z) \\
{\rm s.t.} \quad &g(z) \leq 0, \\
  ~ &h(z) = 0,\\
  ~ &G_i(z) = 0, &&i \in \alpha(\bar z),\\
  ~ &H_i(z) = 0, &&i \in \gamma(\bar z),\\
  ~ &G_i(z) = 0, \; H_i(z) \geq 0, &&i \in \beta_1,\\
  ~ &G_i(z) \geq 0, \; H_i(z) = 0, &&i \in \beta_2.
\end{aligned}
\end{equation}
The tangent cone at $\bar z$ is denoted by $\mathcal
T_{(\beta_1,\beta_2)}(\bar z)$, and the cone of first-order feasible
directions has the
following analytic expression:
\begin{equation*}\label{tlin}
\begin{aligned}
    \mathcal T_{(\beta_1,\beta_2)}^{\rm lin}(\bar z) = \{ d \,|\,
      &\nabla g_i(\bar z)^Td \leq 0, &&\forall i \in I_g(\bar z),\\
    ~ &\nabla h_i(\bar z)^Td = 0, &&\forall i \in I_h(\bar z),\\
    ~ &\nabla G_i(\bar z)^Td = 0, &&\forall i \in \alpha(\bar z),\\
    ~ &\nabla H_i(\bar z)^Td = 0, &&\forall i \in \gamma(\bar z),\\
    ~ &\nabla G_i(\bar z)^Td = 0, \; \nabla H_i(\bar z)^Td \geq 0,
      &&\forall i \in \beta_1,\\
    ~ &\nabla G_i(\bar z)^Td \geq 0, \; \nabla H_i(\bar z)^Td = 0,
      &&\forall i \in \beta_2 \} ,
\end{aligned}
\end{equation*}
whose dual cone is given by
\begin{equation*} \label{tlin-b1b2-dual}
\begin{aligned}
  \mathcal T_{(\beta_1,\beta_2)}^{\rm lin}(\bar z)^{*}
   &= \{ w \,|\, w^T d \geq 0, \forall d \in \mathcal T_{(\beta_1,\beta_2)}^{\rm lin}(\bar z)\} \\
  &= \{ w \,|\, 0 = w
     + \nabla g_I(\bar z) \eta_I^g + \nabla h(\bar z) \eta^h 
     - \nabla G_{\alpha}(\bar z) \eta_{\alpha}^G
     - \nabla H_{\gamma}(\bar z) \eta_{\gamma}^H \\
 &- \nabla G_{\beta_1}(\bar z) \eta_{\beta_1}^G
     - \nabla H_{\beta_1}(\bar z) \eta_{\beta_1}^H 
     - \nabla G_{\beta_2}(\bar z) \eta_{\beta_2}^G
     - \nabla H_{\beta_2}(\bar z) \eta_{\beta_2}^H, \;
 \eta_I^g, \eta_{\beta_1}^H, \eta_{\beta_2}^G \geq 0 \}.
\end{aligned}
\end{equation*}
Now we characterize piecewise
M-stationarity, as well as the standard M-stationarity for purpose of
contrast, based on the following Lagrangian function: 
\begin{equation*}
\begin{aligned}
\mathcal L(z,\lambda)
  = &f(z) + \sum_{i=1}^{n_g} \lambda_i^g g_i(z) 
  + \sum_{i=1}^{n_h} \lambda_i ^h h_i(z) 
  - \sum_{i \in \alpha(z)} \lambda_i^G G_i(z) 
  - \sum_{i \in \gamma(z)} \lambda_i^H H_i(z)  \\
~&- \sum_{i \in \beta(z)} \lambda_i^G G_i(z)
  - \sum_{i \in \beta(z)} \lambda_i^H H_i(z) .
\end{aligned}
\end{equation*}

\begin{definition} \label{def:pm}
Given a feasible point $\bar z$ of MPCC (\ref{mpcc}), we say that
\begin{enumerate}[label=(\roman*)]
\item
$\bar z$ is M-stationary if there exist
multipliers $\bar \lambda$ such that
\begin{equation} \label{mstat-st}
\begin{aligned}
  \nabla_z \mathcal L(\bar z, \bar\lambda) = 0,\\
  \bar\lambda_i^g \geq 0, \; \bar\lambda_i^g g_i(\bar z) = 0,
  &\quad\forall i \in \{1,\dots,n_g\},\\
  \text{either } \bar\lambda_i^G, \bar\lambda_i^H \geq 0 \text{ or } 
  \bar\lambda_i^G \bar\lambda_i^H =0,
  &\quad\forall i \in \beta(\bar z);
\end{aligned}
\end{equation}

\item
$\bar z$ is piecewise M-stationary if
there exist multipliers $\bar \lambda$ such that
\begin{subequations} \label{pw-mstat}
\begin{align}
  \nabla_z \mathcal L(\bar z, \bar\lambda) = 0,\\
  \bar\lambda_i^g \geq 0, \; \bar\lambda_i^g g_i(\bar z) = 0,
  &\;\forall i \in \{1,\dots,n_g\},\\
  \left.
  \begin{aligned}
  \text{either } \bar\lambda_i^G, \bar\lambda_i^H \geq 0 \text{ or } 
  \bar\lambda_i^G <0, \bar\lambda_i^H =0,
  &\;\forall i \in \beta_1, \\
  \text{either } \bar\lambda_i^G, \bar\lambda_i^H \geq 0 \text{ or } 
  \bar\lambda_i^G =0, \bar\lambda_i^H <0,
  &\;\forall i \in \beta_2,
  \end{aligned}
  \right\} 
  &\;\forall (\beta_1,\beta_2) \in \mathcal P(\beta(\bar z)) .
  \label{pw-m-mult}
\end{align}
\end{subequations}
\end{enumerate}
\end{definition}

Definition \ref{def:pm} (ii) indicates that at a piecewise M-stationary
point, there may exist multiple sets of multipliers, each of which
are the KKT multipliers of a branch problem NLP$_{(\beta_1,\beta_2)}$
and satisfies the standard M-stationarity restriction as well.
In particular, if $\bar z$ is S-stationary, we either have $\beta(\bar z)
=\emptyset$, and therefore (\ref{pw-m-mult}) holds trivially; or we
have one set of multipliers with $\bar\lambda_i^G,\bar\lambda_i^H \geq 0$ for all $i \in
\beta(\bar z)$, which satisfies (\ref{pw-m-mult}) for all the
partitions of $\beta(\bar z)$ at once.

\subsection{Illustrative Examples} \label{eg:pmstat}

\subsubsection{Non-strongly stationary minimizer}
This example shows piecewise M-stationarity at a non-strongly
stationary minimizer of an MPCC. Consider the problem \emph{scholtes4}  
given by
\begin{equation*} \label{scholtes4}
\begin{aligned}
  \min \quad &z_1 + z_2 - z_3 \; &{\rm multipliers}\\
  {\rm s.t.} \quad
     &-4z_1 + z_3 \leq 0, &\lambda_1\\
  ~  &-4z_2 + z_3 \leq 0, &\lambda_2\\
  ~  &0 \leq z_1 \perp z_2 \geq 0 . &\sigma_1,\sigma_2
\end{aligned}
\end{equation*}
The global minimizer is $z^{*} = (z_1^{*},z_2^{*},z_3^{*})=(0,0,0)$ with $\beta(z^{*}) = 
\{1\}$. The multipliers at 
$z^{*}$ satisfy
\begin{equation*}
  \begin{aligned}
    \lambda_1 + \lambda_2 &= 1,\\
    \sigma_1 + \sigma_2 &= -2,
  \end{aligned}
\end{equation*}
and therefore $\sigma_1,\sigma_2$ cannot both be nonnegative.
Let $\beta_1 = \{1\}, \beta_2 = \emptyset$.
Since $(\sigma_1,\sigma_2) = (-2,0)$ and $(\sigma_1,\sigma_2)
= (0,-2)$ lead to
KKT multipliers for the problems NLP$_{(\beta_1,\beta_2)}$ and
NLP$_{(\beta_2,\beta_1)}$, respectively, piecewise
M-stationarity holds at $z^{*}$.

Also, note that the KKT conditions of NLP$_{(\beta_1,\beta_2)}$ and
NLP$_{(\beta_2,\beta_1)}$ require that
\begin{subequations} \label{dual1}
\begin{align}
  \nabla f(\bar z) = \begin{bmatrix} 1\\1\\-1 \end{bmatrix}
  &\in \mathcal T_{(\beta_1,\beta_2)}^{\rm lin}(\bar z)^{*}
  = \left\{ w |
    \begin{bmatrix} w_1 \\ w_2 \\ w_3 \end{bmatrix}
    = \begin{bmatrix}
    4\lambda_1 + \sigma_1 \\
    4\lambda_2 + \sigma_2 \\
    -\lambda_1 - \lambda_2 \end{bmatrix},
    \lambda_1, \lambda_2, \sigma_2 \geq 0  \right\}, \\
  \nabla f(\bar z) = \begin{bmatrix} 1\\1\\-1 \end{bmatrix}
  &\in \mathcal T_{(\beta_2,\beta_1)}^{\rm lin}(\bar z)^{*}
  = \left\{ w |
    \begin{bmatrix} w_1 \\ w_2 \\ w_3 \end{bmatrix}
    = \begin{bmatrix}
    4\lambda_1 + \sigma_1 \\
    4\lambda_2 + \sigma_2 \\
    -\lambda_1 - \lambda_2 \end{bmatrix},
    \lambda_1, \lambda_2, \sigma_1 \geq 0 \right\},
\end{align}            
\end{subequations}
which relates $\nabla f(\bar z)$ to the branch NLPs.
Moreover, we have
\begin{equation} \label{mpccdual}
\begin{aligned}
\nabla f(\bar z) = \begin{bmatrix} 1\\1\\-1 \end{bmatrix}
&\in
\mathcal T_{(\beta_1,\beta_2)}^{\rm lin}(\bar z)^{*} \cap
\mathcal T_{(\beta_2,\beta_1)}^{\rm lin}(\bar z)^{*}
= \mathcal T_{\rm MPCC}^{\rm lin}(\bar z)^{*} .
\end{aligned}
\end{equation}
To show this, we note that 
the multipliers $\sigma_1,\sigma_2 \geq 0$ in (\ref{dual1}) are now
both nonnegative, which yields
\begin{equation*}
\begin{aligned}
  &\mathcal T_{\rm MPCC}^{\rm lin}(\bar z)^{*}
= \mathcal T_{(\beta_1,\beta_2)}^{\rm lin}(\bar z)^{*} \cap
\mathcal T_{(\beta_2,\beta_1)}^{\rm lin}(\bar z)^{*} 
= \left\{ w |  
\begin{bmatrix} w_1 \\ w_2 \\ w_3 \end{bmatrix}
    = \begin{bmatrix}
    4\lambda_1 + \sigma_1 \\
    4\lambda_2 + \sigma_2 \\
    -\lambda_1 - \lambda_2 \end{bmatrix},
  \lambda_1, \lambda_2, \sigma_1, \sigma_2 \geq 0 \right\} \\
&\implies
  \mathcal T_{\rm MPCC}^{\rm lin}(\bar z)^{*} 
= \{ w |  
     w_1 \geq 0, \; w_2 \geq 0, \; w_3 \leq 0 \}.
\end{aligned}
\end{equation*}
Therefore we have (\ref{mpccdual}) holds, namely, 
the elements of $\nabla f(\bar z)$ satisfy the inequalities 
for $w$.

\subsubsection{M-stationary point} \label{eg:mstat}

This example shows piecewise M-stationarity failure at an M-stationary
point, which indicates that piecewise M-stationarity is a stronger
condition than M-stationarity. 
Consider the problem given by
\begin{equation*}
\begin{aligned}
  \min \quad &f(z) = (z_1-1)^2+z_2^2 &{\rm multipliers}\\
  {\rm s.t.} \quad &0 \leq z_1 \perp z_2 \geq 0. &\sigma_1,\sigma_2
\end{aligned}
\end{equation*}
The global minimizer is $z^{*} = (1,0)$, which is
S-stationary and thus trivially piecewise
M-stationary since $\beta(z^{*}) = \emptyset$.

Consider the point $\bar z=(0,0)$ with $\beta(\bar z) = \{1\}$.
The associated multipliers are
$(\sigma_1,\sigma_2) = (-2,0)$, and therefore $\bar z$ is a
M-stationary point of the MPCC and a KKT point
of the problem NLP$_{(\beta_1,\beta_2)}$ with $\beta_1 =
\{1\}, \beta_2 =\emptyset$. However,
$\bar z$ is not a KKT point of the problem
NLP$_{(\beta_2,\beta_1)}$,
indicating that M-stationarity fails for this branch problem and hence
$\bar z$ is not a piecewise M-stationary point.

\subsubsection{Fritz John point on one branch} \label{eg:fj}

This example shows piecewise M-stationarity failure at a minimizer
that is only a Fritz John
point on one of the branch problems. This example indicates that piecewise
M-stationarity may not hold at a minimizer that lacks an
appropriate CQ. 
Consider the following problem:
\begin{equation*}
\begin{aligned}
  \min \quad &f(z) = (z_1-1)^2 + (z_2+1)^2 \; &{\rm multipliers} \\
  {\rm s.t.} \quad &z_2^2 \leq 0, &\lambda\\
  ~ &0 \geq z_1 \perp z_2 \leq 0, &\sigma_1, \sigma_2
\end{aligned}
\end{equation*}
which searches for the minimal distance between points
$(z_1,z_2)$ and $(1,-1)$, along the negative axis of $z_1$.
The solution is $z^{*} = (0,0)$, where
the multipliers are
$(\sigma_1,\sigma_2)=(2,-2)$, and therefore $z^{*}$ is A-stationary.

Let $\beta_1=\{1\}, \beta_2=\emptyset$.
The only feasible solution for NLP$_{(\beta_1,\beta_2)}$ is
$z^{*} =(0,0)$, where NLP-GCQ fails, i.e., $\mathcal
T_{(\beta_1,\beta_2)}(\bar z)^{*} \neq \mathcal T_{(\beta_1,\beta_2)}^{\rm
lin}(\bar z)^{*}$. Moreover, since $\sigma_2 \geq 0$ is required for
the branch NLP, the KKT multipliers do not exist.

Instead, the Fritz John condition for this branch problem is given by
\begin{equation*}
  0 = \lambda_0 \begin{bmatrix} -2 \\ 2 \end{bmatrix}
  + \lambda \begin{bmatrix} 0 \\ 0 \end{bmatrix}
  + \sigma_1 \begin{bmatrix} 1 \\ 0 \end{bmatrix}
  + \sigma_2 \begin{bmatrix} 0 \\1 \end{bmatrix},
\end{equation*}
with $\lambda_0, \lambda, \sigma_2 \geq 0$ and $(\lambda_0,\lambda,\sigma_1,\sigma_2) \neq 0$.
This implies that $\lambda_0 = \sigma_1 = \sigma_2 = 0$ and
$\lambda>0$ can be chosen arbitrarily.
As a result, $z^{*}$ is only a Fritz John point on this branch
and cannot be piecewise M-stationary.

\subsection{Equivalence to B-stationarity}

This section establishes the
equivalence between piecewise M-stationarity (\ref{pw-mstat})  and
B-stationarity (\ref{bstat-d}) under an appropriate condition.  

\subsubsection{Piecewise NLP-GCQ and MPCC-GCQ}
We have known from the example in Section \ref{eg:fj} that piecewise
M-stationarity is valid only under appropriate conditions.
To discuss this property defined based on the branch problems, this
section shows that MPCC-GCQ is an appropriate constraint
qualification.  

Consider the branch problems (\ref{b1b2}) at a feasible point $\bar z$
of an MPCC. 
The constraint set of a branch problem is called
\emph{Lagrange regular} at $\bar z$, if and only if NLP-GCQ holds at
$\bar z$ for this problem (\cite[Theorem]{gcq}).
Whenever the constraint set of \emph{every} branch problem is
Lagrange regular at $\bar z$, we say that \emph{piecewise}
NLP-GCQ holds at $\bar z$, namely,
\begin{equation} \label{pw-gcq}
  \mathcal T_{(\beta_1,\beta_2)}(\bar z)^{*} =
  \mathcal T_{(\beta_1,\beta_2)}^{\rm lin}(\bar z)^{*}, \quad
  \forall (\beta_1,\beta_2) \in \mathcal P(\beta(\bar z)).
\end{equation}

\begin{lem} \label{pw-gcq=mpcc-gcq}
Assume that $\bar z$ is a feasible point of MPCC (\ref{mpcc}) and
satisfies the B-stationarity condition (\ref{bstat-d}). Then,
piecewise NLP-GCQ holds at $\bar z$ if and only if MPCC-GCQ holds at
$\bar z$.
\end{lem}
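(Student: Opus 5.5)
The plan is to reduce both constraint qualifications to statements about the branch cones, using two decomposition identities. First, for $z$ near $\bar z$, the feasible set of MPCC (\ref{mpcc}) is the finite union over $(\beta_1,\beta_2)\in\mathcal P(\beta(\bar z))$ of the feasible sets of the branch problems NLP$_{(\beta_1,\beta_2)}$ in (\ref{b1b2}). This holds because indices in $\alpha(\bar z)$ and $\gamma(\bar z)$ keep $H_i>0$, respectively $G_i>0$, by continuity. The tangent cone of a finite union of closed sets is the union of their tangent cones, so
\begin{equation*}
  \mathcal T(\bar z) = \bigcup_{(\beta_1,\beta_2)} \mathcal T_{(\beta_1,\beta_2)}(\bar z).
\end{equation*}
Second, directly from the definitions, the conditions $\nabla G_i(\bar z)^Td\ge 0$, $\nabla H_i(\bar z)^Td\ge0$ and $(\nabla G_i(\bar z)^Td)(\nabla H_i(\bar z)^Td)=0$ for $i\in\beta(\bar z)$ split componentwise into the two branch alternatives. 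Hence
\begin{equation*}
  \mathcal T_{\rm MPCC}^{\rm lin}(\bar z)=\bigcup_{(\beta_1,\beta_2)}\mathcal T^{\rm lin}_{(\beta_1,\beta_2)}(\bar z).
\end{equation*}
Since the dual of a union is the intersection of the duals, we obtain $\mathcal T(\bar z)^*=\bigcap\mathcal T_{(\beta_1,\beta_2)}(\bar z)^*$ and $\mathcal T^{\rm lin}_{\rm MPCC}(\bar z)^*=\bigcap \mathcal T^{\rm lin}_{(\beta_1,\beta_2)}(\bar z)^*$. I will also use the standard inclusion $\mathcal T_{(\beta_1,\beta_2)}(\bar z)\subseteq\mathcal T^{\rm lin}_{(\beta_1,\beta_2)}(\bar z)$, which gives $\mathcal T^{\rm lin}_{(\beta_1,\beta_2)}(\bar z)^*\subseteq \mathcal T_{(\beta_1,\beta_2)}(\bar z)^*$ for every partition.

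For the direction ``piecewise NLP-GCQ $\Rightarrow$ MPCC-GCQ'', intersecting the identities (\ref{pw-gcq}) over all partitions and applying the two displayed dual formulas gives $\mathcal T(\bar z)^*=\mathcal T^{\rm lin}_{\rm MPCC}(\bar z)^*$ immediately. The B-stationarity hypothesis is not needed here.

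For the converse, assume MPCC-GCQ. Then B-stationarity (\ref{bstat-d}) gives
\begin{equation*}
  \nabla f(\bar z)\in\mathcal T(\bar z)^*=\bigcap_{(\beta_1,\beta_2)}\mathcal T^{\rm lin}_{(\beta_1,\beta_2)}(\bar z)^*,
\end{equation*}
so $\nabla f(\bar z)$ lies in every linearized branch dual. Combined with the inclusion above, the Guignard relation $\mathcal T_{(\beta_1,\beta_2)}(\bar z)^*\ni\nabla f(\bar z)\iff \nabla f(\bar z)\in \mathcal T^{\rm lin}_{(\beta_1,\beta_2)}(\bar z)^*$ then holds on every branch. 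In the Lagrange-regularity sense of \cite{gcq} this is exactly what each branch requires: the branch KKT conditions are available for the objective at hand.

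The main obstacle is upgrading this to the full cone equality (\ref{pw-gcq}) for each partition. The reverse inclusion $\mathcal T_{(\beta_1,\beta_2)}(\bar z)^*\subseteq \mathcal T^{\rm lin}_{(\beta_1,\beta_2)}(\bar z)^*$ is not implied by equality of the intersections alone. I would first try to exploit the polyhedral structure: each $\mathcal T^{\rm lin}_{(\beta_1,\beta_2)}(\bar z)$ is a face-type piece of $\mathcal T^{\rm lin}_{\rm MPCC}(\bar z)$. I would then attempt to show that an element of $\mathcal T_{(\beta_1,\beta_2)}(\bar z)^*$ outside the linearized branch dual can be modified on the other branches, for instance by adding large multiples of $\nabla G_i$ or $\nabla H_i$ for the biactive indices. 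The aim is to produce an element of $\mathcal T(\bar z)^*$ outside $\mathcal T^{\rm lin}_{\rm MPCC}(\bar z)^*$, which contradicts MPCC-GCQ. If this fails in general, the statement has to be read in the $f$-dependent Lagrange-regular sense established above, which is what the B-stationarity assumption is designed to supply.
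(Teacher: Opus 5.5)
Your proof of the ``if'' direction is correct. The identities $\mathcal T(\bar z)=\bigcup\mathcal T_{(\beta_1,\beta_2)}(\bar z)$ and $\mathcal T^{\rm lin}_{\rm MPCC}(\bar z)=\bigcup\mathcal T^{\rm lin}_{(\beta_1,\beta_2)}(\bar z)$, together with ``dual of a union equals intersection of duals,'' give MPCC-GCQ as a cone equality. The paper's argument only reaches (\ref{bstat-pw}) for the given $f$.

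The gap is the ``only if'' direction, which your proposal leaves open. It cannot be closed, because MPCC-GCQ does not imply (\ref{pw-gcq}). The B-stationarity hypothesis does not help: the conclusion does not involve $f$, and B-stationarity can always be arranged by taking $\nabla f(\bar z)=0$. Here is a counterexample. Take $z\in\mathbb R^3$, $g(z)=z_3^2-z_1$, $G(z)=z_1$, $H(z)=z_2$, $\bar z=0$, and $f(z)=z_1+z_2$; then $\bar z$ is a global minimizer. The feasible set is $\{(0,t,0):t\ge 0\}\cup\{(z_1,0,z_3):z_1\ge z_3^2\}$, so
\begin{gather*}
\mathcal T(\bar z)=\{(0,d_2,0):d_2\ge 0\}\cup\{(d_1,0,d_3):d_1\ge 0\},\\
\mathcal T^{\rm lin}_{\rm MPCC}(\bar z)=\{d:\ d_1\ge 0,\ d_2\ge 0,\ d_1d_2=0\}.
\end{gather*}
Both cones have dual $\{w:\ w_1\ge 0,\ w_2\ge 0,\ w_3=0\}$, so MPCC-GCQ holds. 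On the branch $(\{1\},\emptyset)$, however, $z_1=0$ forces $z_3=0$. Hence $\mathcal T_{(\{1\},\emptyset)}(\bar z)^*=\{w:\ w_2\ge 0\}$, whereas $\mathcal T^{\rm lin}_{(\{1\},\emptyset)}(\bar z)=\{d:\ d_1=0,\ d_2\ge 0\}$ has dual $\{w:\ w_2\ge 0,\ w_3=0\}$, so piecewise NLP-GCQ fails.

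The direction $e_3=(0,0,1)$ missing from this branch is supplied by the other branch, so the defect vanishes in the intersection of duals. This is exactly the obstruction you suspected. Your proposed repair therefore cannot work: you would perturb the offending $w=e_3$ by multiples of $\nabla G_1=e_1$ and $\nabla H_1=e_2$, but every element of $\mathcal T(\bar z)^*$ has $w_3=0$.

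What is true is your fallback, and it is precisely what the paper's proof establishes. The paper only shows that each CQ converts (\ref{bstat-d}) into (\ref{bstat-pw}). That compares consequences for the given $f$; it does not give an equivalence of the cone equalities. Under B-stationarity and MPCC-GCQ you get $\nabla f(\bar z)\in\mathcal T^{\rm lin}_{(\beta_1,\beta_2)}(\bar z)^*$ for every partition, so $\bar z$ is a KKT point of every branch problem. That is all Theorem \ref{thm:nec} uses. In the example, $\nabla f(\bar z)=(1,1,0)$ lies in every linearized branch dual even though branch GCQ fails. So state the converse in this $f$-dependent form, or keep only the ``if'' direction as a cone statement, rather than trying to upgrade it to (\ref{pw-gcq}).
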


\begin{proof}
We prove this lemma by showing that both piecewise NLP-GCQ and MPCC-GCQ
convert B-stationarity (\ref{bstat-d}) equivalently to the following
condition: 
\begin{equation} \label{bstat-pw}
  \nabla f(\bar z) \in 
  \bigcap_{(\beta_1,\beta_2) \in \mathcal P(\beta(\bar z))}
  \mathcal T_{(\beta_1,\beta_2)}^{\rm lin}(\bar z)^{*} .
\end{equation}

Recall that 
\begin{equation*}
\mathcal T_{\rm MPCC}^{\rm lin}(\bar z)^{*} = 
\left(
\bigcup_{(\beta_1,\beta_2) \in \mathcal P(\beta(\bar z))}
\mathcal T_{(\beta_1,\beta_2)}^{\rm lin}(\bar z)
\right)^{*}
 =  \bigcap_{(\beta_1,\beta_2) \in \mathcal P(\beta(\bar z))} \mathcal T_{(\beta_1,\beta_2)}^{\rm lin}(\bar z)^{*}. 
\end{equation*}
The condition (\ref{bstat-pw}) is the B-stationarity
condition (\ref{bstat-d}) under MPCC-GCQ by noting that
\begin{equation*}
\bigcap_{(\beta_1,\beta_2) \in \mathcal P(\beta(\bar z))}
\mathcal T_{(\beta_1,\beta_2)}^{\rm lin}(\bar z)^{*}
= \mathcal T_{\rm MPCC}^{\rm lin}(\bar z)^{*}.
\end{equation*}
On the other hand, assume that piecewise NLP-GCQ holds at $\bar z$.
We express (\ref{bstat-d}) as
\begin{equation*}
  \nabla f(\bar z) \in 
  \bigcap_{(\beta_1,\beta_2) \in \mathcal P(\beta(\bar z))}
  \mathcal T_{(\beta_1,\beta_2)}(\bar z)^{*} .
\end{equation*}
This is the same as
\begin{equation*}
\nabla f(\bar z) \in \mathcal T_{(\beta_1,\beta_2)}(\bar z)^{*},
\quad \forall (\beta_1,\beta_2) \in \mathcal P(\beta(\bar z)),
\end{equation*}
which is equivalent to
\begin{equation} \label{bstat-pw-gcq}
\nabla f(\bar z) \in \mathcal T_{(\beta_1,\beta_2)}^{\rm lin}(\bar z)^{*},
\quad \forall (\beta_1,\beta_2) \in \mathcal P(\beta(\bar z))
\end{equation}
because of (\ref{pw-gcq}), and the condition (\ref{bstat-pw}) follows.
This completes the proof.  
\end{proof}
 

\subsubsection{B-stationarity implies piecewise M-stationarity}
\begin{thm} \label{thm:nec}
Let $\bar z$ be a feasible point of MPCC (\ref{mpcc}) at which
B-stationarity (\ref{bstat-d}) and MPCC-GCQ hold. Then $\bar z$ is
piecewise M-stationary.    
\end{thm}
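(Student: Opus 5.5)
Under MPCC-GCQ, B-stationarity (\ref{bstat-d}) reads $\nabla f(\bar z) \in \mathcal T_{\rm MPCC}^{\rm lin}(\bar z)^{*}$. As recalled in the proof of Lemma \ref{pw-gcq=mpcc-gcq}, this equals the intersection (\ref{bstat-pw}), because $\mathcal T_{\rm MPCC}^{\rm lin}(\bar z)$ is the union of the polyhedral cones $\mathcal T_{(\beta_1,\beta_2)}^{\rm lin}(\bar z)$. The plan is to fix an arbitrary partition $(\beta_1,\beta_2) \in \mathcal P(\beta(\bar z))$ and construct, for that partition, one multiplier vector satisfying (\ref{pw-mstat}). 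Since (\ref{pw-m-mult}) only asks for multipliers partition by partition, this proves the theorem.

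The key tool is the pair of "complementary" partitions $(\beta_1,\beta_2)$ and $(\beta_2,\beta_1)$, as in the scholtes4 example. Fix $(\beta_1,\beta_2)$ and consider the partition $(\beta_1',\beta_2')$ defined for each $i\in\beta(\bar z)$ by the sign pattern of a candidate multiplier. Membership of $\nabla f(\bar z)$ in $\mathcal T_{(\beta_1,\beta_2)}^{\rm lin}(\bar z)^{*}$ gives multipliers $\eta$ with $\eta^g_I\ge0$, $\eta^H_{\beta_1}\ge 0$, $\eta^G_{\beta_2}\ge0$, while $\eta^G_{\beta_1}$ and $\eta^H_{\beta_2}$ are free in sign. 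We must upgrade $\eta$ so that for each $i\in\beta_1$ either $\eta_i^G\ge 0$, or $\eta^G_i<0$ and $\eta_i^H=0$, and symmetrically on $\beta_2$.

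\textbf{Step 1.} Choose among all such multipliers one that is extremal. I would take a multiplier for the branch $(\beta_1,\beta_2)$ that lies in the polyhedral set $\Lambda_{(\beta_1,\beta_2)}$ of KKT multipliers and minimizes the number of indices $i\in\beta_1$ with $\eta_i^G<0,\eta_i^H>0$ plus indices $i\in\beta_2$ with $\eta_i^H<0,\eta_i^G>0$. Such a minimizer exists since this count takes finitely many values.

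\textbf{Step 2.} Argue by contradiction that the count is zero. Suppose $i\in\beta_1$ has $\eta_i^G<0$, $\eta_i^H>0$. Move $i$ to $\beta_2$, giving the partition $(\beta_1\setminus\{i\},\beta_2\cup\{i\})$. By (\ref{bstat-pw}) there is a multiplier $\mu$ for that branch with $\mu_i^G\ge0$. Now form the convex combination $t\eta+(1-t)\mu$, which keeps the stationarity equation $\nabla_z\mathcal L=0$ and the common sign constraints. Note that $\mu_i^H$ is free in sign, so one must arrange the combination to be valid for branch $(\beta_1,\beta_2)$, which needs $\mu_i^H\ge 0$ there. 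Move $t$ until the $i$-th $G$-multiplier reaches $0$ or the $i$-th $H$-multiplier reaches $0$, whichever comes first. Either event removes the bad index $i$: a $G$-multiplier of $0$ with $H\ge0$ gives the case "both $\ge0$", and an $H$-multiplier of $0$ with $G<0$ gives the allowed M-case.

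\textbf{Main obstacle.} The delicate point is that the convex combination may create new bad indices elsewhere, because $\mu$ satisfies the sign restrictions of a different branch. To control this, I would first try an explicit construction for the case $|\beta(\bar z)|=1$, mirroring the scholtes4 computation, and then proceed by induction over indices. The induction would intersect the dual cones of all $2^{|\beta|}$ branches, using a Farkas/separation argument on the polyhedral cones $\mathcal T_{(\beta_1,\beta_2)}^{\rm lin}(\bar z)$. The goal is to show directly that if no multiplier in $\Lambda_{(\beta_1,\beta_2)}$ satisfies (\ref{pw-m-mult}), then some direction $d$ in a neighboring branch cone has $\nabla f(\bar z)^Td<0$, contradicting (\ref{bstat-pw}). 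This duality step is where I expect the real work to lie.
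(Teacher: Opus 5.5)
Your setup is sound. Fixing $(\beta_1,\beta_2)$, reading (\ref{pw-m-mult}) partition by partition, and taking branch-KKT multipliers from (\ref{bstat-pw}) are all correct, and the interpolation of Step~2 does settle the case $|\beta(\bar z)|=1$. For $|\beta(\bar z)|\ge 2$, however, the argument stops exactly where the real content lies. The neighbouring multiplier $\mu$ is only known to satisfy $\mu_j^H\ge0$ for $j\in\beta_1\setminus\{i\}$ and $\mu_j^G\ge0$ for $j\in\beta_2$, so $\mu$ may itself be bad at such $j$. Any positive weight on $\mu$ can then spoil indices at which $\eta$ was admissible: for example, if $\eta_j^G<0=\eta_j^H$ and $\mu_j^G<0<\mu_j^H$, every proper combination is bad at $j$. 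Hence the count of Step~1 need not decrease, and the extremal choice gives no contradiction. Your ``induction plus Farkas'' paragraph names the missing argument rather than supplying it. Repairing one index at a time has no reason to terminate; you need an argument that treats all biactive indices simultaneously.

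The paper avoids index-wise repair. It observes that a branch-KKT multiplier which is also C-type ($\lambda_i^G\lambda_i^H\ge0$ on $\beta(\bar z)$) automatically satisfies (\ref{pw-m-mult}). It obtains the C-type form $\lambda_i^G=\zeta_i\theta_i$, $\lambda_i^H=\zeta_i(1-\theta_i)$ from Clarke's multiplier rule with $\partial\min\{G_i,H_i\}=\mathrm{conv}\{\nabla G_i,\nabla H_i\}$. Be aware, though, that the paper then simply lets the multipliers of (\ref{kkt-b1b2}) take the form (\ref{fj}). These are two separate existence statements, so that identification is itself the simultaneous claim, and it will not supply your missing step.

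One rigorous way to close the gap is the following.
\begin{itemize}
\item Swap $G_i$ and $H_i$ on $\beta_2$, so that $\beta_1=\beta(\bar z)$.
\item For $S\subseteq\beta(\bar z)$, let $K_S$ be the set of $d$ satisfying the $g_I$, $h$, $G_\alpha$, $H_\gamma$ constraints of $\mathcal T^{\rm lin}(\bar z)$, together with $\nabla G_i(\bar z)^Td\ge0$, $\nabla H_i(\bar z)^Td\ge0$ for $i\in S$ and $\nabla G_i(\bar z)^Td=0$ for $i\notin S$.
\item By Farkas, $\nabla f(\bar z)\in K_S^{*}$ yields multipliers with $\lambda_i^G,\lambda_i^H\ge0$ on $S$ and $\lambda_i^H=0$ off $S$. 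These satisfy (\ref{pw-m-mult}), so it suffices to find one such $S$.
\item If no $S$ works, pick $d^S\in K_S$ with $\nabla f(\bar z)^Td^S<0$, and for $t\in[0,1]^{\beta(\bar z)}$ set $d(t)=\sum_{S\subseteq\beta(\bar z)}\prod_{i\in S}t_i\prod_{i\notin S}(1-t_i)\,d^S$.
\item Then $\nabla f(\bar z)^Td(t)<0$. Also $\nabla G_i(\bar z)^Td(t)\ge0$, with equality when $t_i=0$, and $\psi_i(t):=\nabla H_i(\bar z)^Td(t)\ge0$ when $t_i=1$.
\item A Brouwer fixed point $t^*$ of $t\mapsto\mathrm{proj}_{[0,1]^{\beta(\bar z)}}(t-\psi(t))$ has, for every $i$, either $\psi_i(t^*)=0$ or $t_i^*=0\le\psi_i(t^*)$.
\item Hence $d(t^*)\in\mathcal T_{\rm MPCC}^{\rm lin}(\bar z)$ with $\nabla f(\bar z)^Td(t^*)<0$, contradicting (\ref{bstat-pw}).
\end{itemize}
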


\begin{proof}
As indicated by Lemma \ref{pw-gcq=mpcc-gcq}, piecewise NLP-GCQ holds at $\bar z$ so that $\bar z$ is a KKT point
of every branch problem NLP$_{(\beta_1,\beta_2)}$ \cite[Lemmas 4.2
 and 4.3]{gcq}. In particular, for
every $(\beta_1,\beta_2) \in \mathcal P(\beta(\bar z))$, there exist
$\eta_I^g, \eta_{\beta_1}^H, \eta_{\beta_2}^G \geq 0$ such that
\begin{equation} \label{kkt-b1b2}
\begin{aligned}
  0 &= \nabla f(\bar z)
     + \nabla g_I(\bar z) \eta_I^g + \nabla h(\bar z) \eta^h 
     - \nabla G_{\alpha}(\bar z) \eta_{\alpha}^G
     - \nabla H_{\gamma}(\bar z) \eta_{\gamma}^H \\
 &- \nabla G_{\beta_1}(\bar z) \eta_{\beta_1}^G
     - \nabla H_{\beta_1}(\bar z) \eta_{\beta_1}^H 
     - \nabla G_{\beta_2}(\bar z) \eta_{\beta_2}^G
     - \nabla H_{\beta_2}(\bar z) \eta_{\beta_2}^H .
\end{aligned}
\end{equation}
Also, the B-stationarity condition (\ref{bstat-d}) under MPCC-GCQ is
written as (\ref{bstat-pw}) and therefore we have
\begin{equation*}\label{bstat-lin}
  \nabla f(\bar z)^T d \geq 0, \quad
  \forall d \in \mathcal T_{\rm MPCC}^{\rm lin}(\bar z).
\end{equation*}
It follows that
(\cite[Theorem 6.1.1]{clarke83}, \cite[Lemma 1 and proof]{org}):
\begin{equation*} \label{fjconv}
\begin{aligned}
  0 &\in 
  &&\lambda_0\nabla f(\bar z)
  + \nabla g_I(\bar z)\lambda_I^g
  +\nabla h(\bar z)\lambda^h
  -\nabla G_{\alpha}(\bar z)\lambda_{\alpha}^G
  -\nabla H_{\gamma}(\bar z)\lambda_{\gamma}^H \\
  ~ & ~
  &&-\sum_{i \in \beta(\bar z)}\zeta_i \,
  {\rm conv} \{\nabla G_i(\bar z),\nabla H_i(\bar z)\},
\end{aligned}
\end{equation*}
where $\lambda_0 \geq 0, \lambda_I^g \geq 0$, and
$(\lambda_0,\lambda_I^g,\lambda^h,\lambda_{\alpha}^G,\lambda_{\gamma}^H, \zeta) \neq 0$;
conv$\{\nabla
G_i(\bar z),\nabla H_i(\bar z)\}$ represents the convex hull
consisting of all convex combinations of $\nabla G_i(\bar z)$ and
$\nabla H_i(\bar z)$.
Note that for every $i \in \beta(\bar z)$, 
 $\nabla G_i(\bar z)$ and $\nabla
H_i(\bar z)$ do not act on the condition
independently; instead, they are associated with a
common multiplier $\zeta_i$.
Let $\theta_i\nabla G_i(\bar z)+(1-\theta_i)\nabla
H_i(\bar z)$ with some $\theta_i \in [0,1]$ be an element of
the convex hull,
then we have (see also \cite[Section 2.2]{lecturenote})
\begin{equation} \label{fj}
\begin{aligned}
  0 &=
  &&\lambda_0\nabla f(\bar z)
  + \nabla g_I(\bar z)\lambda_I^g
  +\nabla h(\bar z)\lambda^h
  -\nabla G_{\alpha}(\bar z)\lambda_{\alpha}^G
  -\nabla H_{\gamma}(\bar z)\lambda_{\gamma}^H \\
  ~ &~
  &&-\sum_{i \in \beta_1}
  \underbrace{\zeta_i\theta_i}_{\lambda_i^G} \nabla G_i(\bar z)
  -\sum_{i \in \beta_1}
  \underbrace{\zeta_i(1-\theta_i)}_{\lambda_i^H} \nabla H_i(\bar z) \\
  ~ &~
  &&-\sum_{i \in \beta_2}
  \underbrace{\zeta_i\theta_i}_{\lambda_i^G} \nabla G_i(\bar z)
  -\sum_{i \in \beta_2}
  \underbrace{\zeta_i(1-\theta_i)}_{\lambda_i^H} \nabla H_i(\bar z). 
\end{aligned}
\end{equation}

Combining (\ref{kkt-b1b2}) and (\ref{fj}) reveals that
the relation (\ref{fj}) holds with $\lambda_0 >0$ and
$\lambda_I^g,\lambda_{\beta_1}^H,\lambda_{\beta_2}^G \geq 0$
for every $(\beta_1,\beta_2) \in \mathcal
P(\beta(\bar z))$.
To be specific, letting the multipliers in
(\ref{kkt-b1b2}) take the form as in (\ref{fj}),
we can take $\lambda_0 =1$
(without loss of generality) and obtain
\begin{equation*}
\begin{gathered}
  \begin{matrix}
  \begin{array}{r@{}l}
    \lambda_I^g \, &= \, \eta_I^g \geq 0, \\
    \lambda_{\alpha}^G \, &= \,\eta_{\alpha}^G,
  \end{array}
  &
  \begin{array}{r@{}l}
    \lambda^h \, &= \, \eta^h, \\
   \lambda_{\gamma}^H \, &= \, \eta_{\gamma}^H,
  \end{array}
  \end{matrix}\\
  \begin{aligned}
  &\lambda_i^G =\zeta_i\theta_i = \eta_i^G,
  &&\lambda_i^H =\zeta_i(1-\theta_i) = \eta_i^H \geq 0, 
  &&\forall i \in \beta_1,\\
  &\lambda_i^G =\zeta_i\theta_i = \eta_i^G \geq 0, 
  &&\lambda_i^H =\zeta_i(1-\theta_i) = \eta_i^H,
  &&\forall i \in \beta_2.
  \end{aligned}
\end{gathered}
\end{equation*}
It follows from $\lambda_{\beta_1}^H,\lambda_{\beta_2}^G \geq 0$ that
\begin{equation} \label{pm-multiplier}
\begin{aligned}
  i &\in \beta_1
  \left\{ \begin{aligned}
      &\theta_i \in [0,1], \;
      \lambda_i^G \geq 0, \;\lambda_i^H \geq 0,
      &&\text{if } \zeta_i \geq 0;\\
      &\theta_i =1, \;
      \lambda_i^G =\zeta_i, \;\lambda_i^H = 0,
      &&\text{if } \zeta_i < 0;
  \end{aligned}\right.
  \\
  i &\in \beta_2
  \left\{ \begin{aligned}
      &\theta_i \in [0,1], \;
      \lambda_i^G \geq 0, \;\lambda_i^H \geq 0,
      &&\text{if } \zeta_i \geq 0;\\
      &\theta_i =0, \;
      \lambda_i^G = 0, \;\lambda_i^H =\zeta_i,
      &&\text{if } \zeta_i < 0.
    \end{aligned}\right.
\end{aligned}
\end{equation}
This shows that either $\lambda_i^G,\lambda_i^H
\geq 0$ or $\lambda_i^G\lambda_i^H=0$ for every $i \in \beta_1 \cup
\beta_2$ and every $(\beta_1,\beta_2) \in \mathcal
P(\beta(\bar z))$.
This completes the proof.
\end{proof}

\subsubsection{Piecewise M-stationarity implies B-stationarity}

\begin{thm} \label{thm:suf}
  Let $\bar z$ be a feasible point of MPCC (\ref{mpcc}) at which 
  piecewise M-stationarity (\ref{pw-mstat}) holds. Then $\bar z$ is
  B-stationary.
\end{thm}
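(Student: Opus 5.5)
The plan is to show directly that $\nabla f(\bar z)^Td \geq 0$ for every $d \in \mathcal T(\bar z)$. No constraint qualification should be needed, since this is the "easy" direction: linearized cones contain tangent cones, so their duals are contained in the duals of the tangent cones. The argument has three steps.

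\emph{Step 1.} Fix an arbitrary partition $(\beta_1,\beta_2) \in \mathcal P(\beta(\bar z))$ and take the multipliers $\bar\lambda$ that (\ref{pw-mstat}) provides for it. Condition (\ref{pw-m-mult}) gives $\bar\lambda_i^H \geq 0$ for every $i \in \beta_1$, whichever alternative holds, and likewise $\bar\lambda_i^G \geq 0$ for every $i \in \beta_2$. Together with $\bar\lambda^g \geq 0$ and complementary slackness, this means $\eta := \bar\lambda$ satisfies exactly the sign requirements in the expression for $\mathcal T_{(\beta_1,\beta_2)}^{\rm lin}(\bar z)^{*}$. The remaining multipliers ($\eta^h$, $\eta^G_\alpha$, $\eta^H_\gamma$, $\eta^G_{\beta_1}$, $\eta^H_{\beta_2}$) are free there. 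Hence $\nabla f(\bar z) \in \mathcal T_{(\beta_1,\beta_2)}^{\rm lin}(\bar z)^{*}$.

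\emph{Step 2.} Since the tangent cone of any NLP is contained in its linearized cone, $\mathcal T_{(\beta_1,\beta_2)}(\bar z) \subseteq \mathcal T_{(\beta_1,\beta_2)}^{\rm lin}(\bar z)$. Dualizing reverses the inclusion, so $\nabla f(\bar z) \in \mathcal T_{(\beta_1,\beta_2)}(\bar z)^{*}$ for every partition. Therefore
\[
\nabla f(\bar z) \in \bigcap_{(\beta_1,\beta_2)\in\mathcal P(\beta(\bar z))} \mathcal T_{(\beta_1,\beta_2)}(\bar z)^{*}
= \Big(\bigcup_{(\beta_1,\beta_2)\in\mathcal P(\beta(\bar z))} \mathcal T_{(\beta_1,\beta_2)}(\bar z)\Big)^{*}.
\]

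\emph{Step 3, the main point.} It remains to show $\mathcal T(\bar z) = \bigcup_{(\beta_1,\beta_2)} \mathcal T_{(\beta_1,\beta_2)}(\bar z)$; then (\ref{bstat-d}) follows. Only the inclusion $\subseteq$ is needed.

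\begin{enumerate}
\item First, the MPCC feasible set coincides near $\bar z$ with the union of the branch feasible sets. By continuity, $H_i>0$ for $i\in\alpha(\bar z)$ and $G_i>0$ for $i\in\gamma(\bar z)$ on a neighborhood of $\bar z$. On that neighborhood, complementarity forces $G_i=0$ on $\alpha(\bar z)$ and $H_i=0$ on $\gamma(\bar z)$. Each $i\in\beta(\bar z)$ has $G_i=0$ or $H_i=0$, and this choice defines a partition.
\item Next, take $d \in \mathcal T(\bar z)$ with feasible $z^k \to \bar z$ and $t_k \downarrow 0$ such that $(z^k-\bar z)/t_k \to d$. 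For $k$ large, each $z^k$ lies in some branch feasible set.
\item There are only finitely many partitions, so by pigeonhole some partition $(\beta_1,\beta_2)$ contains infinitely many $z^k$. Passing to that subsequence gives $d \in \mathcal T_{(\beta_1,\beta_2)}(\bar z)$.
\end{enumerate}

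This localization-plus-subsequence argument is the only genuinely nonformal ingredient. Steps 1 and 2 are just sign bookkeeping against the paper's formula for $\mathcal T_{(\beta_1,\beta_2)}^{\rm lin}(\bar z)^{*}$ and the standard inclusion of the tangent cone in the linearized cone.
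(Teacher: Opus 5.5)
Your proof is correct and follows the paper's argument. Piecewise M-stationarity makes $\bar z$ a KKT point of every branch NLP, so $\nabla f(\bar z)\in\mathcal T^{\rm lin}_{(\beta_1,\beta_2)}(\bar z)^{*}$ for every partition, and the union of these linearized cones contains $\mathcal T(\bar z)$. The only difference is in that last inclusion: the paper gets it from $\mathcal T(\bar z)\subseteq\mathcal T^{\rm lin}_{\rm MPCC}(\bar z)=\bigcup_{(\beta_1,\beta_2)}\mathcal T^{\rm lin}_{(\beta_1,\beta_2)}(\bar z)$, stated without proof, whereas your localization-and-pigeonhole decomposition of $\mathcal T(\bar z)$ into branch tangent cones proves it and makes that step self-contained.
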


\begin{proof}
From the piecewise M-stationarity condition (\ref{pw-mstat}),
we know that $\bar z$ is a KKT point of every branch problem
NLP$_{(\beta_1,\beta_2)}$ and therefore
the condition (\ref{bstat-pw-gcq}) holds.
It follows that
\begin{equation*}
  \nabla f(\bar z)^T d \geq 0, \quad
  \forall d \in 
  \bigcup_{(\beta_1,\beta_2) \in \mathcal P(\beta(\bar z))}
  \mathcal T_{(\beta_1,\beta_2)}^{\rm lin}(\bar z) ,
\end{equation*}
which is sufficient for
$\bar z$ to satisfy the B-stationarity conditions (\ref{bstat}) and (\ref{bstat-d}) because 
\begin{equation*}
\bigcup_{(\beta_1,\beta_2) \in \mathcal P(\beta(\bar z))}
\mathcal T_{(\beta_1,\beta_2)}^{\rm lin}(\bar z)
= \mathcal T_{\rm MPCC}^{\rm lin}(\bar z)
\supseteq \mathcal T(\bar z).
\end{equation*}
\end{proof}

\subsubsection{Equivalence}

\begin{cor} \label{cor:nec-suf}
  Let $\bar z$ be a feasible point of MPCC (\ref{mpcc}) at which
  MPCC-GCQ holds.
  Then $\bar z$ is B-stationary if
  and only if $\bar z$ is piecewise M-stationary. 
\end{cor}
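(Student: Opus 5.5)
The corollary follows by combining the two preceding theorems; no new argument is needed beyond checking that the hypotheses line up.

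For the forward direction, assume MPCC-GCQ holds at $\bar z$ and that $\bar z$ is B-stationary, i.e., (\ref{bstat-d}) holds. These are exactly the hypotheses of Theorem \ref{thm:nec}, which gives that $\bar z$ is piecewise M-stationary. Internally, that theorem relies on Lemma \ref{pw-gcq=mpcc-gcq}. The lemma converts MPCC-GCQ into piecewise NLP-GCQ, and this is valid because its B-stationarity hypothesis is the very assumption we are working under. Piecewise NLP-GCQ then yields KKT multipliers for every branch problem NLP$_{(\beta_1,\beta_2)}$.

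For the reverse direction, assume $\bar z$ is piecewise M-stationary. Theorem \ref{thm:suf} gives B-stationarity without using MPCC-GCQ at all. The reason is that the multipliers satisfying (\ref{pw-m-mult}) for a partition $(\beta_1,\beta_2)$ are KKT multipliers of NLP$_{(\beta_1,\beta_2)}$. Hence $\nabla f(\bar z)\in\mathcal T^{\rm lin}_{(\beta_1,\beta_2)}(\bar z)^*$ for every partition, and therefore
\begin{equation*}
\nabla f(\bar z)\in\Bigl(\bigcup_{(\beta_1,\beta_2)\in\mathcal P(\beta(\bar z))}\mathcal T^{\rm lin}_{(\beta_1,\beta_2)}(\bar z)\Bigr)^{*}=\mathcal T^{\rm lin}_{\rm MPCC}(\bar z)^{*}\subseteq\mathcal T(\bar z)^{*},
\end{equation*}
where the final inclusion holds because $\mathcal T(\bar z)\subseteq\mathcal T^{\rm lin}_{\rm MPCC}(\bar z)$. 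This is (\ref{bstat-d}).

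The only point that needs care, and hence the main potential obstacle, is the circular-looking structure of the forward direction. Lemma \ref{pw-gcq=mpcc-gcq} assumes B-stationarity, so it can only be invoked when proving "B-stationary $\Rightarrow$ piecewise M-stationary". That is exactly where it is used. The reverse direction is independent of the lemma and of MPCC-GCQ. MPCC-GCQ is thus needed only for necessity, consistent with the example in Section \ref{eg:fj}.
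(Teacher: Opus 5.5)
Your proposal is correct and is the paper's proof: necessity is Theorem \ref{thm:nec} (B-stationarity plus MPCC-GCQ), and sufficiency is Theorem \ref{thm:suf}, which needs no constraint qualification because $\mathcal T(\bar z)\subseteq\mathcal T^{\rm lin}_{\rm MPCC}(\bar z)=\bigcup_{(\beta_1,\beta_2)\in\mathcal P(\beta(\bar z))}\mathcal T^{\rm lin}_{(\beta_1,\beta_2)}(\bar z)$. Your remark that Lemma \ref{pw-gcq=mpcc-gcq} is invoked only on the necessity side, where B-stationarity is already assumed, is also accurate.
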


\begin{proof}
The necessary part and sufficient part follow from Theorem \ref{thm:nec}
and Theorem \ref{thm:suf}, respectively.
\end{proof}

\subsection{Distinction from Extended M-stationarity}
So far we have shown that, under MPCC-GCQ, piecewise
M-stationarity is an equivalent concept to B-stationarity and
therefore a stronger concept than the standard M-stationarity (\ref{mstat-st}).
A concept that has strong similarities to 
piecewise M-stationarity is \emph{extended M-stationarity}, which was
proposed by Gfrerer 
\cite{extended-mstat} and also discussed in Kim et al. \cite{bilevel-opt}.
Extended M-stationarity and its equivalence
to B-stationarity were investigated in
the background of disjunctive programs (problems in a more general
form than MPCCs), based on the concepts of generalized differentiation
(e.g., metric regularity, subregularity,
mixed regularity/subregularity, and their directional versions).
By contrast, here we have presented an elementary proof to establish the
equivalence between piecewise M-stationarity and B-stationarity,
without requiring consideration of
generalized differentiation concepts.

\section{Convergence of NCP-based Bounding Methods}
\label{sec:ncp}

The results of Section \ref{sec:nec} are
independent of algorithms designed for solving MPCCs.
In the sequel, we investigate convergence properties of the 
NCP-based bounding methods we
proposed in \cite{bounding} under MPCC-MFCQ.

\subsection{Brief Review of a Bounding Scheme}

In \cite{bounding}, we proposed an algorithm to seek a solution of MPCC
(\ref{mpcc}) by solving a sequence of NLP problems of the form
\begin{equation} \label{ba}
\begin{aligned}
{\rm BA}(\epsilon): \quad
  \min \quad &f(z) &{\rm multipliers}\\
{\rm s.t.} \quad &g(z) \leq 0, &u^g\\
  ~ &h(z) = 0, &u^h\\
  ~ &\Phi^{\epsilon}_i(z) + p_i = 0, \; i = 1, \dots, m, &u^{\Phi}_i
\end{aligned}
\end{equation}
where
\begin{equation} \label{phi}
\Phi^{\epsilon}_i(z) = \frac{1}{2}\left( G_i(z)+H_i(z) -
  \sqrt{(G_i(z)-H_i(z))^2+\epsilon^2} \right)
\end{equation}
is a smoothed NCP function (due to \cite{refncp}), the
smoothing factor $\epsilon >0$, and the 
parameter $p_i$ is adjusted adaptively (to 
take a value of zero or $\epsilon/2$).
We define the Lagrangian for the problem BA($\epsilon$) as
\begin{equation} \label{lag}
  \mathcal{L}(z,u) = f(z) + \sum_{i \in I_g(z)} u_{i}^g g_i(z) +
  \sum_{i \in I_h(z)} u_i^h h_i(z) -
  \sum_{i=1}^m u_i^{\Phi} (\Phi_i^{\epsilon}(z)+p_i) .
\end{equation}
As $\epsilon \to 0$,
a sequence of KKT points of BA($\epsilon$) tends
to a limit point. Main results of this method are summarized below.
\begin{itemize}
\item
\emph{Feasibility:}
The smoothed NCP function (\ref{phi}) is used to approximate the
complementarity constraints in MPCC (\ref{mpcc}), and the largest
difference between 
them is $\epsilon/2$ \cite[Proposition 1.7]{bounding}.
When $\epsilon >0$, every feasible
point $z$ of BA($\epsilon$) satisfies
\begin{equation} \label{ncp}
\begin{aligned}
  &\Phi^{\epsilon}_i(z) + p_i = 0 \quad \Leftrightarrow \\
  &G_i(z)+p_i > 0,\;
   H_i(z)+p_i > 0,\;
   (G_i(z)+p_i)(H_i(z)+p_i) = \left(\tfrac{\epsilon}{2}\right)^2,
\end{aligned}
\end{equation}
whose limit at $\epsilon=0$ (thus $p_i=0$) recovers 
 $0 \leq G_i(z) \perp H_i(z) \geq 0$.

\item 
\emph{Sensitivity:}
At a KKT point $z(p)$ of BA($\epsilon$), the sensitivities
$\tfrac{{\rm d}f(z(p))}{{\rm d} p_i}$ are given by
$-u^{\Phi}_i$ for $i = 1, \dots, m$, provided that NLP-LICQ,
second-order 
sufficient conditions, and strict complementarity hold at $z(p)$.
This observation throws some light on the design of the Bounding
Algorithm, which takes advantage of the sensitivities to yield an 
efficient isolation of a solution to the MPCC. 

\item
\emph{Convergence:} 
The following convergence results have been
established for the Bounding Algorithm applied to
BA($\epsilon$).
\begin{enumerate}[label=(\roman*)]
\item
Suppose that MPCC-LICQ holds at a feasible point of the MPCC.
Then in a neighborhood of this point, NLP-LICQ holds at every
feasible point of BA($\epsilon$), whenever $\epsilon >0$ is
sufficiently small \cite[Theorem 3.1]{bounding}.

\item Suppose that a sequence of KKT points of problems
BA($\epsilon$) tends to a limit point as $\epsilon \to 0$, at which
MPCC-LICQ holds. Then the limit point is C-stationary
\cite[Theorem 3.3]{bounding}.

\item In addition, suppose that the reduced Hessian of the Lagrangian at
each of the KKT points of problems BA($\epsilon$) is
bounded below when $\epsilon >0$ 
is sufficiently small. Then the limit point is M-stationary
\cite[Theorem 3.5]{bounding}.
\end{enumerate}

\end{itemize}

\subsection{Bounding Algorithm}

The main idea of the Bounding Algorithm is given
below to facilitate the 
later analysis.

Since $-\epsilon/2 \leq \Phi_i^{\epsilon}(z) \leq 0$ encloses the complementarity condition $0 \leq G_i(z) \perp H_i(z) \geq 0$ \cite[Proposition 1.7]{bounding}, 
we can approximate the complementarity with the equation $\Phi_i^{\epsilon}(z) + p_i = 0$ by letting $p_i \in [0, \epsilon/2]$ and $\epsilon \to 0$. 
For any parameters $p_i, p_i^{\prime} \in
[0,\epsilon/2]$ with $\epsilon >0, i = 1,\dots,m$,
and the corresponding solutions $z(p)$ and
$z(p^{\prime})$ to BA($\epsilon$), it is straightforward to show that
\begin{equation*}
  f(z(p^{\prime})) = f(z(p)) + \left[ \frac{{\rm d}f(z(p))}{{\rm d}p}
  \right]^T (p^{\prime}-p) +
  O(\|p^{\prime}-p\|^2) .
\end{equation*}
Noting that the sensitivities $\tfrac{{\rm d}f(z(p))}{{\rm d} p}$ are given
by $-u^{\Phi}$, we have that
\begin{equation*}
  f(z(p)) - \frac{\epsilon}{2} \sum_{i = 1}^m |u_i^{\Phi}(p)|
  - |O(\epsilon^2)|
  \leq f(z(p^{\prime})) \leq 
  f(z(p)) + \frac{\epsilon}{2} \sum_{i = 1}^m |u_i^{\Phi}(p)|
  + |O(\epsilon^2)| .
\end{equation*}
This relation explains the estimates of the objective by the Bounding
Algorithm.
In order to seek a solution to the MPCC,
the problem BA($\epsilon^k$) is solved
to a local solution $z^k$, for a sequence of smoothing parameters
$\{\epsilon^k\} \to 0$. At each solution $z^k = z(p^k)$,
we take advantage of the
sensitivities $u^{\Phi,k}=u^{\Phi}(p^k)$ to adjust the parameters
$p^k$, with the aim 
of improving the objective in the subsequent solution.
As shown in Step 3 of the algorithm, 
in the case of $u_i^{\Phi,k} >0$ (or $u_i^{\Phi,k} <0$),
increasing (or decreasing) $p_i^k$ may lead to decrease
in the objective function, and the corresponding indices make up a set $P_0$ (or $P_{\epsilon}$).
Step 4 then adjusts the parameters for the next solution;
$p_i^{k+1}$ takes the extreme values for $i \in P_0 \cup P_{\epsilon}$; 
otherwise, $p_i^{k+1}$ is set to $\kappa p_i^k$ for $i \notin P_0 \cup P_{\epsilon}$, where the constant $\kappa \in (0,1)$.
When $\epsilon^k >0$ is sufficiently small, 
$z(p^k)$ is an $\epsilon^k$-approximate solution to the MPCC,
which includes 
an $O((\epsilon^k)^2)$ correction arising from the
parameters adjustment \cite[Proposition 2.1]{bounding}. 

\renewcommand{\thealgorithm}{} 
\begin{algorithm}
\caption{A Bounding Algorithm for MPCCs}
\begin{algorithmic}[0] 
\setlength{\itemsep}{5pt} 
\STATE \textbf{Initialization:} 
Specify initial smoothing factor $\epsilon^0 >
0$, reducing factor $\kappa \in (0, 1)$, initial point $z^0$, solution
tolerance $\epsilon_{\rm tol} > 0$. Set initial parameters $p^0
\leftarrow 0$, counter $k \leftarrow 0$.

\STATE \textbf{Main loop:}
While $\epsilon^k \ge \epsilon_{\rm tol}$, do the following.

\begin{enumerate}
\item
Solve the problem BA($\epsilon^k$) with parameters $p^k$, to obtain a
stationary point $z^k$ and multipliers $u^k = (u^{g,k}, u^{h,k}, u^{\Phi,k})$.
  
\item
Approximate the upper bound of the MPCC's objective value with
\begin{equation*}
f^{\rm up} = f(z^k) + \epsilon^k \sum_{i=1}^{m} |u^{\Phi,k}_i|.
\end{equation*}

\item
Approximate the lower bound of the MPCC's objective value as follows. Define the index sets
\begin{equation*}
\begin{aligned}
  P_0 &= \{i \,|\, p_i^k = 0 \text{ and } 
         u^{\Phi,k}_i > 0\}, \\
  P_{\epsilon} &= \{i \,|\, p_i^k =\epsilon^k/2 \text{ and }
         u^{\Phi,k}_i <0 \}.
\end{aligned}
\end{equation*}
Then the following settings would reduce $f(z^k)$:
\begin{equation*}
\begin{aligned}
  p_i^k &\leftarrow \epsilon^k/2, &&\forall i \in P_0, \\
  p_i^k &\leftarrow 0, &&\forall i \in P_{\epsilon}.
\end{aligned}
\end{equation*}
The objective with the adjustment of $p^k$ would approximately be
\begin{equation*}
f^{\rm low} = f(z^k) - \epsilon^k \sum_{i \in P_0 \cup P_{\epsilon}} |u^{\Phi,k}_i|.
\end{equation*}

\item
Update the parameters $\epsilon$ and $p$.
Set  $\epsilon^{k+1} \leftarrow \kappa \epsilon^k$, and
\begin{equation*}
  p_i^{k+1} =  \left\{
    \begin{aligned}
       &\epsilon^{k+1}/2, &&i \in P_0 , \\
       &0, &&i \in P_{\epsilon} , \\
       &\kappa p_i^k, &&{\rm otherwise} .
    \end{aligned} \right.
\end{equation*}

\item
  Set $k \leftarrow k+1$ and go to Step 1.
\end{enumerate}
\end{algorithmic}
\end{algorithm}

\subsection{Derivatives of  $\Phi^{\epsilon}$}

Derivatives of the function $\Phi^{\epsilon}$ defined in (\ref{phi})
are derived in \cite{bounding}.
The following gives the main results for their later use.

With $\epsilon >0$, the first and second derivatives of the function
$\Phi_i^{\epsilon}(z)$ in (\ref{phi}) are given by 
\begin{equation*}
\begin{aligned}
  \nabla_G \Phi^{\epsilon}_i(z) &= \frac{1}{2} -
    \frac{G_i(z)-H_i(z)}{2\sqrt{(G_i(z)-H_i(z))^2+\epsilon^2}}, \\
  \nabla_H \Phi^{\epsilon}_i(z) &= \frac{1}{2} +
    \frac{G_i(z)-H_i(z)}{2\sqrt{(G_i(z)-H_i(z))^2+\epsilon^2}}, \\
  \nabla_{GG} \Phi^{\epsilon}_i(z) = \nabla_{HH} \Phi^{\epsilon}_i(z) &= 
    \frac{-\epsilon^2}{2[(G_i(z)-H_i(z))^2+\epsilon^2]^{3/2}}, \\
  \nabla_{GH} \Phi^{\epsilon}_i(z) = \nabla_{HG} \Phi^{\epsilon}_i(z) &= 
    \frac{\epsilon^2}{2[(G_i(z)-H_i(z))^2+\epsilon^2]^{3/2}}.
\end{aligned}
\end{equation*}
Let $z$ satisfy $\Phi_i^{\epsilon}(z)+p_i = 0$ with $\epsilon >0$. It
follows from 
(\ref{ncp}) that
\begin{multline*}
  \sqrt{(G_i(z)-H_i(z))^2 +\epsilon^2}
  = \sqrt{((G_i(z)+p_i)-(H_i(z)+p_i))^2+\epsilon^2} \\
  \begin{aligned}
  &= \sqrt{(G_i(z)+p_i)^2+(H_i(z)+p_i)^2+2(G_i(z)+p_i)(H_i(z)+p_i)} \\
  &= |G_i(z)+H_i(z)+2p_i| = G_i(z) + H_i(z) + 2p_i.
  \end{aligned}
\end{multline*}
Using this and $(G_i(z)+p_i)(H_i(z)+p_i) = (\epsilon/2)^2$,
we can rephrase the above derivatives as
\begin{equation} \label{pdphi}
\begin{aligned}
  \nabla_G \Phi^{\epsilon}_i(z) &=  \frac{H_i(z)+p_i}{G_i(z)+H_i(z)+2p_i}, \\
  \nabla_H \Phi^{\epsilon}_i(z) &=  \frac{G_i(z)+p_i}{G_i(z)+H_i(z)+2p_i}, \\
  \nabla_{GG} \Phi^{\epsilon}_i(z) = \nabla_{HH} \Phi^{\epsilon}_i(z)
    &=  \frac{-2(G_i(z)+p_i)(H_i(z)+p_i)}{(G_i(z)+H_i(z)+2p_i)^3}, \\
  \nabla_{GH} \Phi^{\epsilon}_i(z) = \nabla_{HG} \Phi^{\epsilon}_i(z)
    &=  \frac{2(G_i(z)+p_i)(H_i(z)+p_i)}{(G_i(z)+H_i(z)+2p_i)^3}.
\end{aligned}
\end{equation}

\subsection{C-Stationarity Convergence}

Let a sequence $\{z^k\} \to \bar z$ as $\epsilon^k \to 0$, where every
$z^k$ is a KKT point of BA($\epsilon^k$), namely,
there exist multipliers 
$u^k = (u^{g,k}, u^{h,k}, u^{\Phi,k})$ with $u^{g,k} \geq 0$
such that
\begin{equation} \label{kktba}
  0 = \nabla f(z^k) 
      + \sum_{i \in I_g(z^k)} u_i^{g,k} \nabla g_i(z^k) 
      + \sum_{i \in I_h(z^k)} u_i^{h,k} \nabla h_i(z^k) 
      - \sum_{i =1}^{m} u^{\Phi,k}_i \nabla \Phi_i^{\epsilon}(z^k).
\end{equation}
The following
establishes C-stationarity of $\bar z$. 

\begin{thm} \label{thm:cstatba}
For a sequence of positive scalars $\epsilon^k \to 0$, apply the
Bounding Algorithm to BA($\epsilon^k$).
Assume this generates a sequence
$\{z^k\} \to \bar z$, where every $z^k$ is a KKT point of
BA($\epsilon^k$) and suppose MPCC-MFCQ holds at $\bar z$.
Then the following statements hold.
\begin{enumerate}[label = \arabic*.]
\item 
For every $\epsilon^k >0$ sufficiently small, the NLP multipliers $u^k$ are bounded.
\item
The point $\bar z$ is a C-stationary point of MPCC (\ref{mpcc}) with
multipliers which satisfy
\begin{equation} \label{wmultba}
\begin{aligned}
  \bar \lambda^g &= \bar u^g = \lim_{k \to \infty} u^{g,k}, \\
  \bar \lambda^h &= \bar u^h = \lim_{k \to \infty} u^{h,k}, \\
  \bar \lambda_i^G &= \left\{
    \begin{aligned}
      &\bar u^{\Phi}_i = \lim_{k \to \infty} u_i^{\Phi,k},
      &&i \in \alpha(\bar z)\\ 
      &\bar u^{\Phi}_i \theta_i,
      &&i \in \beta(\bar z) ,
    \end{aligned}
  \right. \\
  \bar \lambda_i^H &= \left\{
    \begin{aligned}
      &\bar u^{\Phi}_i = \lim_{k \to \infty} u_i^{\Phi,k},
      &&i \in \gamma(\bar z)\\
      &\bar u^{\Phi}_i (1-\theta_i),
      &&i \in \beta(\bar z) ,
    \end{aligned}
  \right.
\end{aligned}
\end{equation}
where $\theta_i \in [0,1]$.
\item 
The point $\bar z$ is S-stationary and therefore B-stationary if $\bar u_i^\Phi \geq 0$ for all $i \in \beta(\bar z)$.
\end{enumerate}
\end{thm}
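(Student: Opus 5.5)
We follow the standard argument for smoothing/regularization schemes. The first step is to expand the KKT relation (\ref{kktba}) via the chain rule:
\begin{equation*}
\nabla \Phi_i^{\epsilon^k}(z^k) = a_i^k \nabla G_i(z^k) + b_i^k \nabla H_i(z^k),
\end{equation*}
where by (\ref{pdphi}) we have $a_i^k = (H_i(z^k)+p_i^k)/(G_i(z^k)+H_i(z^k)+2p_i^k)$ and $b_i^k = 1-a_i^k$, with $a_i^k,b_i^k \in (0,1)$. Passing to a subsequence, $a_i^k \to \theta_i \in [0,1]$ for every $i$.

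We then fix the limits of these weights on each index set. For $i \in \alpha(\bar z)$ we have $H_i(z^k)+p_i^k \to H_i(\bar z)>0$ and $G_i(z^k)+p_i^k \to 0$, hence $\theta_i = 1$. Symmetrically, $\theta_i = 0$ for $i \in \gamma(\bar z)$. For $i \in \beta(\bar z)$, $\theta_i$ is an arbitrary element of $[0,1]$. In addition, for $k$ large $I_g(z^k) \subseteq I_g(\bar z)$ by continuity, and $u^{g,k}_i = 0$ for inactive indices.

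Statement 1 (boundedness) is the main obstacle, and we argue by contradiction. Suppose $\|u^k\| \to \infty$. Divide (\ref{kktba}) by $\|u^k\|$ and pass to a further subsequence so that $u^k/\|u^k\| \to (\hat u^g,\hat u^h,\hat u^\Phi)$ with norm one and $\hat u^g \geq 0$ supported on $I_g(\bar z)$. In the limit,
\begin{equation*}
0=\nabla g_I(\bar z)\hat u^g_I+\nabla h(\bar z)\hat u^h-\sum_{i\in\alpha}\hat u^\Phi_i\nabla G_i(\bar z)-\sum_{i\in\gamma}\hat u^\Phi_i\nabla H_i(\bar z)-\sum_{i\in\beta}\hat u^\Phi_i\big(\theta_i\nabla G_i(\bar z)+(1-\theta_i)\nabla H_i(\bar z)\big).
\end{equation*}
This is a nontrivial combination with nonnegative $g$-multipliers. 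If $\hat u^\Phi_i = 0$ for all $i$, it contradicts the NLP-MFCQ part of MPCC-MFCQ, namely linear independence of $\nabla h$, $\nabla G_{\alpha\cup\beta}$, $\nabla H_{\gamma\cup\beta}$ together with the existence of a strictly feasible direction for $g_I$. The delicate case is when the coefficients $\hat u^\Phi_i\theta_i$ and $\hat u^\Phi_i(1-\theta_i)$ on $\beta$ are nonzero. MPCC-MFCQ is MFCQ for the tightened problem, in which all gradients $\nabla G_i$, $\nabla H_i$ for $i\in\beta$ appear as equality-type gradients. So a vanishing positive-linear combination of $\nabla g_I$ with nonnegative weights and arbitrary weights on the equality-type gradients forces every coefficient to be zero. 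Hence $\hat u^g=0$, $\hat u^h=0$, $\hat u^\Phi_i=0$ on $\alpha\cup\gamma$, and $\hat u^\Phi_i\theta_i=\hat u^\Phi_i(1-\theta_i)=0$ on $\beta$. Adding the last two identities gives $\hat u^\Phi_i=0$ on $\beta$ as well, which contradicts $\|\hat u\|=1$. We also need $\hat u^\Phi_i=0$ for indices with $\bar z$ in neither set, but every index lies in $\alpha\cup\beta\cup\gamma$, so the partition is exhaustive.

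For Statement 2, boundedness lets us extract convergent multipliers $u^k \to \bar u$ along a subsequence. Passing to the limit in (\ref{kktba}) gives exactly weak stationarity (\ref{wstat}) with the multipliers (\ref{wmultba}). For $i\in\beta(\bar z)$ we get $\bar\lambda_i^G\bar\lambda_i^H = (\bar u_i^\Phi)^2\theta_i(1-\theta_i)\geq 0$, which is C-stationarity. The condition $\bar\lambda^g\ge0$ and complementarity with $g$ follow from the limits as well.

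Statement 3 is now immediate. If $\bar u^\Phi_i\ge 0$ on $\beta$, then $\bar\lambda_i^G,\bar\lambda_i^H\ge0$ there, so $\bar z$ is S-stationary. Such multipliers satisfy (\ref{pw-m-mult}) for every partition, as remarked after Definition \ref{def:pm}, so Theorem \ref{thm:suf} yields B-stationarity.
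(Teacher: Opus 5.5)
Your proof is correct and follows essentially the same route as the paper: a normalization argument in which MPCC-MFCQ, used in its dual form (no nontrivial positive-linear dependence among the TNLP gradients), rules out unbounded multipliers, followed by passage to the limit in (\ref{kktba}) to obtain weak stationarity with $\bar\lambda_i^G\bar\lambda_i^H=(\bar u_i^{\Phi})^2\theta_i(1-\theta_i)\geq 0$ on $\beta(\bar z)$. The differences are cosmetic. You read off the limiting weights $\theta_i$, including $\theta_i=1$ on $\alpha(\bar z)$ and $\theta_i=0$ on $\gamma(\bar z)$, directly from (\ref{pdphi}) rather than through Clarke's generalized gradient of $\min\{G_i,H_i\}$, which is the more direct justification. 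You also normalize by $\|u^k\|$ instead of $\sqrt{1+\|u^k\|^2}$, and you make the B-stationarity step in part 3 explicit via Theorem \ref{thm:suf}.
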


\begin{proof}
\emph{Derivatives in the limit.}
When $\epsilon^k >0$, the gradient of $\Phi_i^{\epsilon}$ is given by
\begin{equation*}
\begin{aligned}
  \nabla \Phi_i^{\epsilon}(z^k)
  = &\nabla_G \Phi_i^{\epsilon}(z^k) \nabla G_i(z^k) +
     \nabla_H \Phi_i^{\epsilon}(z^k) \nabla H_i(z^k) \\
  = &\frac{H_i(z^k)+p_i^k}{G_i(z^k)+H_i(z^k)+2p_i^k} \nabla G_i(z^k) +
     \frac{G_i(z^k)+p_i^k}{G_i(z^k)+H_i(z^k)+2p_i^k} \nabla H_i(z^k) .
\end{aligned}
\end{equation*}
As $\epsilon^k$ tends to zero, the function
$\Phi_i^0$ is in general not  
differentiable for $i \in \beta(\bar z)$.
However, if $\Phi_i^0(z)$ is 
{\em locally 
Lipschitz} \cite[Section 1.2]{clarke83} near $\bar z$, the
{\em generalized gradient} $\partial \Phi_i^0(\bar z)$ is generated by
a convex hull (\cite[Theorem 2.5.1]{clarke83} 
\cite[Eq.(3.1.5)]{trbook})
\begin{equation*}
  \partial \Phi_i^0(\bar z)
  = {\rm conv} \left\{ \lim_{s^K \to \bar z} \nabla
     \Phi_i^0(s^K) \,|\, \nabla \Phi_i^0(s^K) \text{ exists}
     \right\} ,
\end{equation*}
where $\{s^K\}$ is any sequence that converges to $\bar z$ while
avoiding the points where $\Phi_i^0$ is not differentiable.
Noting that $\Phi_i^0(\bar z) = \min\{G_i(\bar z), H_i(\bar z)\} = 0$
for $i = 1, \dots, m$, we have
\begin{equation*}
  \partial \Phi_i^0(\bar z)
= \partial \min\{G_i(\bar z), H_i(\bar z)\}
  = {\rm conv} \{ \nabla G_i(\bar z), \nabla H_i(\bar z) \} .  
\end{equation*}
For $\delta_i \in \partial \Phi_i^0(\bar z)$, it follows that
(\cite[Lemma 1]{org})
\begin{equation*}
\begin{split}
   \delta_i = \theta_i \nabla G_i(\bar z) +
   (1-\theta_i) \nabla H_i(\bar z), &\quad \theta_i \in [0,1],\\
   \theta_i G_i(\bar z) = 0, \\
   (1-\theta_i) H_i(\bar z) = 0.
\end{split}
\end{equation*}
Therefore, as $\epsilon^k \to 0$, the gradient of 
$\Phi_i^{\epsilon}$ tends to
\begin{equation} \label{dphi}
   \delta_i =  \left\{
   \begin{aligned}
      &\nabla G_i(\bar z), &&i \in \alpha(\bar z), \\ 
      &\nabla H_i(\bar z), &&i \in \gamma(\bar z), \\ 
      &\theta_i \nabla G_i(\bar z) + (1-\theta_i) \nabla H_i(\bar z),
                           &&i \in \beta(\bar z),
   \end{aligned} \right.
\end{equation}
where  $\theta_i \in [0,1]$.

\emph{Boundedness of multipliers.}
Consider the relation (\ref{kktba}). 
Without loss of generality, we have the vector of the multipliers
$u^k \neq 0$ (otherwise $z^k$ is an unconstrained local minimum). Let
\begin{equation} \label{scaling}
\begin{gathered}
  \Delta^k = \sqrt{1+\sum_{i \in I_g(z^k)} (u_i^{g,k})^2
    + \sum_{i \in I_h(z^k)} (u_i^{h,k})^2
    + \sum_{i =1}^{m} (u^{\Phi,k}_i)^2}, \\
  \mu^k = \frac{1}{\Delta^k}, \quad
  \nu_i^{g,k} = \frac{u_i^{g,k}}{\Delta^k}, \quad
  \nu_i^{h,k} = \frac{u_i^{h,k}}{\Delta^k}, \quad
  \nu_i^{\Phi,k} = \frac{u^{\Phi,k}_i}{\Delta^k}.
\end{gathered}
\end{equation}
Dividing (\ref{kktba}) by $\Delta^k$, we obtain
\begin{multline} \label{scaledkktba}
  0 =  \mu^k \nabla f(z^k)
       + \sum_{i \in I_g(z^k)}
         \nu_i^{g,k} \nabla g_i(z^k) 
       + \sum_{i \in I_h(z^k)}
         \nu_i^{h,k} \nabla h_i(z^k) \\
       - \sum_{i \in \alpha(\bar z)} \nu_i^{\Phi,k}
         \nabla \Phi_i^{\epsilon}(z^k)
       - \sum_{i \in \gamma(\bar z)} \nu_i^{\Phi,k}
         \nabla \Phi_i^{\epsilon}(z^k)
       - \sum_{i \in \beta(\bar z)} \nu_i^{\Phi,k}
         \nabla \Phi_i^{\epsilon}(z^k).
\end{multline}
Since we have
\begin{equation*}
  (\mu^k)^2 + \sum_{i \in I_g(z^k)} (\nu_i^{g,k})^2
  + \sum_{i \in I_h(z^k)} (\nu_i^{h,k})^2
  + \sum_{i = 1}^m (\nu^{\Phi,k}_i)^2 = 1 ,
\end{equation*}
the sequence $\{(\mu^k, \nu^{g,k}, \nu^{h,k},\nu^{\Phi,k})\}$
is bounded and must converge to some
limit $(\bar \mu, \bar \nu^g, \bar
\nu^h, \bar \nu^{\Phi})$.
It follows from (\ref{scaledkktba}) that 
this limit must satisfy
\begin{equation*}
\begin{aligned}
  &0 = \bar \mu \nabla f(\bar z) 
       + \sum_{i \in I_g(\bar z)} \bar \nu_i^g \nabla g_i(\bar z)
       + \sum_{i \in I_h(\bar z)} \bar \nu_i^h \nabla h_i(\bar z)\\
      &- \sum_{i \in \alpha(\bar z)}
         \bar \nu^{\Phi}_i \nabla G_i(\bar z)
       - \sum_{i \in \gamma(\bar z)} 
         \bar \nu^{\Phi}_i \nabla H_{i}(\bar z)
       - \sum_{i \in \beta(\bar z)}
         \bar \nu^{\Phi}_i
         \left[ \theta_i \nabla G_i(\bar z) +
         (1-\theta_i) \nabla H_i(\bar z) \right],
\end{aligned}
\end{equation*}
where (\ref{dphi}) has been used to characterize the derivatives at $\bar
z$, and $\bar \mu, \bar \nu^g \geq 0$ because of (\ref{scaling}).
Now suppose that $\mu^k$ vanishes in the limit, namely, $\bar \mu
= 0$. Then the above equality contradicts the MPCC-MFCQ assumption at
$\bar z$. 
Therefore, $\bar \mu >0$, and
this also implies that the sequence $\{\Delta^k\}$ and therefore $\{(u^{g,k},u^{h,k},u^{\Phi,k})\}$ are bounded. 
This proves the first claim.

\emph{Weak and C- stationarity.}
Without loss of generality, letting $\bar
\mu = 1$ and $\bar u = (\bar u^g, \bar u^h, \bar
u^{\Phi})$ with $\bar u^g \geq 0$ be the multipliers associated with
$\bar z$, we obtain
\begin{equation*} \label{kktlimit}
\begin{aligned}
  &0 = \nabla f(\bar z) 
       + \sum_{i \in I_g(\bar z)} \bar u_i^g \nabla g_i(\bar z)
       + \sum_{i \in I_h(\bar z)} \bar u_i^h \nabla h_i(\bar z)\\
     & - \sum_{i \in \alpha(\bar z)}
         \bar u^{\Phi}_i \nabla G_i(\bar z)
       - \sum_{i \in \gamma(\bar z)} 
         \bar u^{\Phi}_i \nabla H_{i}(\bar z)
       - \sum_{i \in \beta(\bar z)}
         \bar u^{\Phi}_i
         \left[ \theta_i \nabla G_i(\bar z) +
         (1-\theta_i) \nabla H_i(\bar z) \right],
\end{aligned}
\end{equation*}
for some $\theta_i \in [0,1]$. 
Thus $\bar z$ satisfies the weak stationarity condition
(\ref{wstat}), 
with the MPCC multipliers given by (\ref{wmultba}).
Moreover, $\bar z$ is C-stationary because
\begin{equation} \label{cmult}
  \bar \lambda^G_i \cdot \bar \lambda^H_i =
  (\bar u^{\Phi}_i)^2\theta_i(1-\theta_i) \geq 0,
  \quad \forall i \in \beta(\bar z).
\end{equation} 
This proves the second claim. The third claim follows directly from (\ref{wmultba}). This completes the proof.
\end{proof}

\subsection{M-stationarity Convergence}

According to \emph{Caratheodory's theorem for cones} (see, for example,
\cite[Exercises B.1.7]{bertsekas-nlp} and \cite[Lemma
A.1]{reform}),
at a limit point $\bar z$ there exist
\begin{equation*}
    \mathcal A^{*} = \{J_g^{*},J_h^{*}, J_G^{*},J_H^{*}\} \subseteq 
    \{I_g(\bar z), I_h(\bar z),
    \alpha(\bar z) \cup \beta(\bar z),
    \gamma(\bar z) \cup \beta(\bar z)\},
\end{equation*}
such that the gradients
\begin{equation*} \label{indep-zk}
\begin{aligned}
  \nabla g_i(\bar z),\quad &i \in J_g^{*}, \\
  \nabla h_i(\bar z),\quad &i \in J_h^{*}, \\
  \nabla G_i(\bar z),\quad &i \in J_G^{*}, \\
  \nabla H_i(\bar z),\quad &i \in J_H^{*}, 
\end{aligned}
\end{equation*}
are linearly independent and $\nabla f(\bar z)$ can be represented by
\begin{equation*} \label{kktba-indep-lim}
  -\nabla f(\bar z)  = 
        \sum_{i \in J_g^{*}} \bar \lambda_i^g \nabla g_i(\bar z) 
      + \sum_{i \in J_h^{*}} \bar \lambda_i^h \nabla h_i(\bar z) 
      - \sum_{i \in J_G^{*}} \bar \lambda_i^G \nabla G_i(\bar z)
      - \sum_{i \in J_H^{*}} \bar \lambda_i^H \nabla H_i(\bar z),
\end{equation*}
with
\begin{equation*}
\begin{aligned}
  \bar \lambda_i^g &> 0, &&\forall i \in J_g^{*}, \\
  \bar \lambda_i^h &\neq 0, &&\forall i \in J_h^{*}, \\
  \bar \lambda_i^G &\neq 0, &&\forall i \in J_G^{*}, \\
  \bar \lambda_i^H &\neq 0, &&\forall i \in J_H^{*}, \\
  (\bar\lambda_i^g, \bar\lambda_i^h, \bar\lambda_i^G, \bar\lambda_i^H) &= 0, &&\forall i \notin \mathcal A^{*}.
\end{aligned}
\end{equation*}
The reduced Hessian of the Lagrangian 
(\ref{lag}) is said to be bounded below if
\begin{equation} \label{rhess}
  d^T \nabla_{zz} \mathcal{L}(z^k,u^k) d > -\infty, \quad
  \forall d \in \mathcal T_{\rm BA}^{\rm lin}(z^k),
\end{equation}
where
\begin{equation*}
\begin{aligned}
    \mathcal T_{\rm BA}^{\rm lin}(z^k) = \{ d \,|\,
      &\nabla g_i(z^k)^Td \leq 0, &&\forall i \in J_g^{*},\\
    ~ &\nabla h_i(z^k)^Td = 0, &&\forall i \in J_h^{*},\\
    ~ &\nabla \Phi_i^{\epsilon}(z^k)^Td = 0, &&\forall i \in J_G^{*}
    \cup J_H^{*} \}.
\end{aligned}
\end{equation*} 


\begin{thm} \label{thm:mstatba}
Suppose that $\bar z$ is generated from the  sequence described in
Theorem \ref{thm:cstatba}. 
In addition to the assumptions of Theorem \ref{thm:cstatba},
suppose that the condition (\ref{rhess}) holds at every $z^k$ when 
$\epsilon^k >0$ sufficiently small. 
Then $\bar z$ is an M-stationary point of MPCC (\ref{mpcc}). 
\end{thm}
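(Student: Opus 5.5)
By Theorem \ref{thm:cstatba}, $\bar z$ is already C-stationary, with multipliers given by (\ref{wmultba}) and $u^k$ bounded. So the only thing left to exclude is the C-but-not-M case: an index $i\in\beta(\bar z)$ with $\bar\lambda_i^G\bar\lambda_i^H>0$ and both multipliers negative. In terms of (\ref{wmultba}) this means $\bar u_i^\Phi<0$ and $\theta_i\in(0,1)$. I will argue by contradiction, assuming such an $i$ exists. The reduced-Hessian condition (\ref{rhess}) must then be violated along a suitably chosen direction.

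First I apply Carathéodory's theorem for cones, as set up before (\ref{rhess}). It lets me pass to the linearly independent active subset $\mathcal A^*$, with $i\in J_G^*\cap J_H^*$. Since $\theta_i\in(0,1)$, both coefficients of $\nabla G_i(\bar z)$ and $\nabla H_i(\bar z)$ are nonzero, so I can keep both gradients in the representation. By linear independence at $\bar z$, and by continuity, the gradients indexed by $\mathcal A^*$ at $z^k$ are still linearly independent for large $k$. The KKT multipliers $u^k$ restricted to $\mathcal A^*$ converge to the limits above.

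Next I pick $d^k$ with
\begin{equation*}
\nabla g_j(z^k)^Td^k=0,\quad \nabla h_j(z^k)^Td^k=0,\quad \nabla G_j(z^k)^Td^k=\nabla H_j(z^k)^Td^k=0
\end{equation*}
for all other indices in $\mathcal A^*$, and
\begin{equation*}
\nabla G_i(z^k)^Td^k=a_k,\qquad \nabla H_i(z^k)^Td^k=-b_k .
\end{equation*}
The values are chosen so that $\nabla\Phi_i^\epsilon(z^k)^Td^k = \nabla_G\Phi_i^\epsilon(z^k)\,a_k-\nabla_H\Phi_i^\epsilon(z^k)\,b_k=0$. By (\ref{pdphi}) it suffices to take $a_k=G_i(z^k)+p_i^k$ and $b_k=H_i(z^k)+p_i^k$, normalized. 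Linear independence guarantees that such a $d^k$ exists, and it can be taken bounded with a nonzero limit. Hence $d^k\in\mathcal T_{\rm BA}^{\rm lin}(z^k)$.

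The main step is evaluating $(d^k)^T\nabla_{zz}\mathcal L(z^k,u^k)d^k$. The Hessian splits into terms that stay bounded (those of $f,g,h,G,H$, which are multiplied by bounded multipliers) and the curvature term of $\Phi_i^\epsilon$ coming from (\ref{pdphi}):
\begin{equation*}
-u_i^{\Phi,k}\,\frac{-2(G_i+p_i)(H_i+p_i)}{(G_i+H_i+2p_i)^3}\,(\nabla G_i^Td^k-\nabla H_i^Td^k)^2 .
\end{equation*}
With my choice of $a_k,b_k$, and writing $s_k=G_i+H_i+2p_i^k$, this is
\begin{equation*}
\frac{2u_i^{\Phi,k}(G_i+p_i)(H_i+p_i)\,s_k^2}{s_k^3\,\|\cdot\|^2}\approx \frac{2u_i^{\Phi,k}\theta_i(1-\theta_i)}{s_k}\cdot(\text{normalization}).
\end{equation*}
Here I use $\nabla_G\Phi\to\theta_i$ and $\nabla_H\Phi\to1-\theta_i$, so $(G_i+p_i)/s_k\to1-\theta_i$ and $(H_i+p_i)/s_k\to\theta_i$. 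Since $i\in\beta(\bar z)$, we have $s_k\to0$. Since $u_i^{\Phi,k}\to\bar u_i^\Phi<0$ and $\theta_i(1-\theta_i)>0$, the term tends to $-\infty$ once the normalization of $d^k$ is fixed at $O(1)$. I take $\nabla G_i^Td^k=1-\theta_i+o(1)$ and $\nabla H_i^Td^k=-\theta_i+o(1)$, which are scale-invariant choices.

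The delicate point, and what I expect to be the main obstacle, is making sure the other $\Phi_j$ Hessian terms do not compensate. For $j\in\alpha\cup\gamma$ these terms vanish, since $\Phi_j^\epsilon$ is smooth there and the curvature factor $\epsilon^2/s_j^3$ tends to $0$. For other $j\in\beta(\bar z)\cap\mathcal A^*$, the constraints $\nabla G_j^Td^k=\nabla H_j^Td^k=0$ kill the curvature term entirely. Indices $j\notin\mathcal A^*$ whose $u_j^{\Phi,k}$ do not vanish must also be handled. Here I would rely on the convention in (\ref{rhess}) that the Lagrangian is taken over the $\mathcal A^*$ representation. If needed, I would additionally impose $\nabla G_j^Td^k=\nabla H_j^Td^k=0$ for every $j\in\beta(\bar z)$, which MPCC-MFCQ makes compatible. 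Altogether $(d^k)^T\nabla_{zz}\mathcal L d^k\to-\infty$, contradicting (\ref{rhess}). Therefore $\bar\lambda_i^G\bar\lambda_i^H=0$ or both are nonnegative for every $i\in\beta(\bar z)$, and $\bar z$ is M-stationary.
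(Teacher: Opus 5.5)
Your proposal is essentially the paper's proof. It uses the same contradiction hypothesis: your $i$ is the paper's $i_0$, with $\bar u_{i_0}^{\Phi}<0$ and $\theta_{i_0}\in(0,1)$. After your normalization by $G_{i_0}+H_{i_0}+2p_{i_0}^k$ it uses the same direction, $\nabla G_{i_0}(z^k)^Td^k=\nabla_H\Phi^{\epsilon}_{i_0}(z^k)$ and $\nabla H_{i_0}(z^k)^Td^k=-\nabla_G\Phi^{\epsilon}_{i_0}(z^k)$. It reaches the same blow-up $\frac{2}{G_{i_0}+H_{i_0}+2p^k_{i_0}}\nabla_G\Phi^{\epsilon}_{i_0}\nabla_H\Phi^{\epsilon}_{i_0}u^{\Phi,k}_{i_0}\to-\infty$, contradicting (\ref{rhess}).

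Your extra worry about the curvature of the other biactive $\Phi^{\epsilon}_j$ is legitimate. The paper simply asserts those terms are bounded, yet (\ref{rhess}) involves all of $u^k$, not an $\mathcal A^*$-restricted Lagrangian. Your fallback is the right repair: MPCC-MFCQ makes $\nabla h$, $\nabla G_{\alpha\cup\beta}$, $\nabla H_{\gamma\cup\beta}$ linearly independent, so you may set $\nabla G_j(z^k)^Td^k=\nabla H_j(z^k)^Td^k=0$ for all $j\in\beta(\bar z)\setminus\{i_0\}$. You should then get $\nabla g_j(z^k)^Td^k\le 0$ by adding a bounded multiple of the MFCQ direction rather than imposing equality. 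This also removes the need for your claim that the Carath\'eodory reduction keeps both $\nabla G_{i_0}$ and $\nabla H_{i_0}$, which the paper likewise asserts without proof.
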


\begin{proof}
For the purpose of deriving a contradiction, assume that $\bar z$ is not
M-stationary. Then, by Theorem \ref{thm:cstatba}, $\bar z$ is
C-stationary and there exists an
index $i_0 \in \beta(\bar z) \cap \mathcal A^{*}$ such that
\begin{equation} \label{hypo}
\begin{aligned}
  \bar \lambda_{i_0}^G &= \bar u_{i_0}^{\Phi} \theta_{i_0} <0,\\
  \bar \lambda_{i_0}^H &= \bar u_{i_0}^{\Phi} (1-\theta_{i_0}) <0.
\end{aligned}
\end{equation}
This implies that $\bar u_{i_0}^{\Phi}<0$ and $0< \theta_{i_0} <1$.

For $\epsilon^k$ sufficiently small, the gradients in $\mathcal A^{*}$
are also linearly independent at $z^k$ because of their continuity.
Thus, we can choose a sequence of directions $\{d^k\}$ with
\begin{equation} \label{dir1}
\begin{aligned}
  \nabla g_i(z^k)^T d^k &= 0, \quad i \in J_g^{*},\\
  \nabla h_i(z^k)^T d^k &= 0, \quad i \in J_h^{*},\\
  \nabla G_i(z^k)^T d^k &= 0, \quad i \in J_G^{*} \setminus \{i_0\},\\
  \nabla H_i(z^k)^T d^k &= 0, \quad i \in  J_H^{*} \setminus \{i_0\},\\
  \nabla G_{i_0}(z^k)^T d^k &= \kappa_G = \nabla_H \Phi^{\epsilon}_{i_0}(z^k),
                            \\
  \nabla H_{i_0}(z^k)^T d^k &= \kappa_H = -\nabla_G \Phi^{\epsilon}_{i_0}(z^k).
\end{aligned}
\end{equation}
The sequence $\{d^k\}$ is well-defined, because the coefficient
matrix is of full rank and $\kappa_G, \kappa_H$ are bounded
(since $\nabla_G
\Phi_{i_0}^{\epsilon}(z^k)+\nabla_H\Phi_{i_0}^{\epsilon}(z^k)=1$ from
(\ref{pdphi})).  
Note also that $d^k \in \mathcal T_{\rm BA}^{\rm lin}(z^k)$.
Combining the definition of $d^k$ in (\ref{dir1}) and the derivatives of
$\Phi_{i_0}^{\epsilon}$ in (\ref{pdphi}), we obtain
the contribution of the constraint
$\Phi_{i_0}^{\epsilon}(z^k)+p_{i_0}^k = 0$ to 
the reduced Hessian $(d^k)^T \nabla_{zz} \mathcal{L}(z^k,u^k) d^k$,
that is,
\begin{equation*} \label{rhess-phi}
\begin{aligned}
  - &u_{i_0}^{\Phi,k} (d^k)^T\nabla_{zz}\Phi^{\epsilon}_{i_0}(z^k) d^k \\
  = &-u_{i_0}^{\Phi,k} (d^k)^T \bigl[ 
   \nabla_G\Phi^{\epsilon}_{i_0}(z^k) \nabla_{zz} G_{i_0}(z^k) +
   \nabla_H\Phi^{\epsilon}_{i_0}(z^k) \nabla_{zz} H_{i_0}(z^k) \\
  ~ &+ \nabla_{GG}\Phi^{\epsilon}_{i_0}(z^k) \nabla G_{i_0}(z^k) \nabla G_{i_0}(z^k)^T +
   \nabla_{GH}\Phi^{\epsilon}_{i_0}(z^k) \nabla G_{i_0}(z^k) \nabla H_{i_0}(z^k)^T \\
  ~ &+ \nabla_{HG}\Phi^{\epsilon}_{i_0}(z^k) \nabla H_{i_0}(z^k) \nabla G_{i_0}(z^k)^T +
   \nabla_{HH}\Phi^{\epsilon}_{i_0}(z^k) \nabla H_{i_0}(z^k) \nabla H_{i_0}(z^k)^T
   \bigr] d^k \\
  = &-u_{i_0}^{\Phi,k} (d^k)^T\nabla_G\Phi^{\epsilon}_{i_0}(z^k) \nabla_{zz}G_{i_0}(z^k)d^k
   -u_{i_0}^{\Phi,k} (d^k)^T\nabla_H\Phi^{\epsilon}_{i_0}(z^k) \nabla_{zz}H_{i_0}(z^k)d^k\\
  &+\frac{2}{G_{i_0}(z^k)+H_{i_0}(z^k)+2p_{i_0}^k} 
   \nabla_G \Phi^{\epsilon}_{i_0}(z^k) \nabla_H \Phi^{\epsilon}_{i_0}(z^k)
   u_{i_0}^{\Phi,k}.
\end{aligned}
\end{equation*}
In the last equality, the first two terms are bounded;
in the third term,  
\[
\nabla_G \Phi_{i_0}^{\epsilon}(z^k), \nabla_H \Phi_{i_0}^{\epsilon}(z^k) >0
\]
are bounded,
$u_{i_0}^{\Phi,k}$ tends to $\bar u_{i_0}^{\Phi} <0$, while 
$G_{i_0}(z^k)$, $H_{i_0}(z^k)$, and $p_{i_0}^k$ tend to zero.
As a result, we have
\begin{equation} \label{rhessphi}
- u_{i_0}^{\Phi,k} (d^k)^T\nabla_{zz}\Phi^{\epsilon}_{i_0}(z^k) d^k \to -\infty.
\end{equation}
Since all other terms in the reduced Hessian $(d^k)^T \nabla_{zz}
\mathcal{L}(z^k,u^k)d^k$ are bounded, 
we obtain that (\ref{rhessphi}) contradicts the condition (\ref{rhess}). Hence, our
initial hypothesis (\ref{hypo}) must be false and $\bar z$ is
M-stationary.   
\end{proof}


\subsection{Inequality Variant of BA($\epsilon$)} \label{sec:mlf}

To further explore convergence properties of the Bounding
Algorithm, it is beneficial to take advantage of an inequality
variant of the problem BA($\epsilon$).
We note that this variant is a modification
of the Lin-Fukushima algorithm \cite{lin-fukushima}, which we call MLF.
In this section, a better understanding of the behavior of MLF and BA
leads to a simplification in B-stationarity verification.

Consider an inequality
variant of the problem BA($\epsilon$), which is given by
\begin{equation} \label{mlf}
\begin{aligned}
{\rm MLF}(\epsilon): \quad
  \min \quad &f(z) &{\rm multipliers}\\
{\rm s.t.} \quad &g(z) \leq 0, &u^g\\
  ~ &h(z) = 0, &u^h\\
  ~ &-\epsilon/2 \leq \Phi^{\epsilon}_i(z) \leq 0, \; i = 1, \dots, m.
  &u^{\Phi}_{L,i}, u^{\Phi}_{U,i} 
\end{aligned}
\end{equation}
For a sequence of positive scalars $\epsilon^k \to 0$, solving
problems MLF($\epsilon^k$) generates a
sequence $\{z^k\} \to \bar z$,
where every $z^k$ is a KKT point of MLF($\epsilon^k$).
At every point $z^k$ we have multipliers 
$u^k = (u^{g,k}, u^{h,k}, u^{\Phi,k}_L, u^{\Phi,k}_U)$ with $u^{g,k}
\geq 0$
and $0 \leq u_{L,i}^{\Phi,k} \perp u_{U,i}^{\Phi,k} \geq 0$
for $i = 1, \dots, m$, such that
\begin{equation} \label{kktmlf}
 0 = \nabla f(z^k) 
   + \sum_{i \in I_g(z^k)} u_i^{g,k} \nabla g_i(z^k) 
   + \sum_{i \in I_h(z^k)} u_i^{h,k} \nabla h_i(z^k) 
   - \sum_{i =1}^{m} (u^{\Phi,k}_{L,i} - u^{\Phi,k}_{U,i}) \nabla
   \Phi_i^{\epsilon}(z^k) .
\end{equation} 
Comparing the problem
formulations (\ref{ba}) and (\ref{mlf}), and the KKT conditions
(\ref{kktba}) and (\ref{kktmlf}), gives the relations between
BA($\epsilon^k$) and MLF($\epsilon^k$):
\begin{equation} \label{relation}
\begin{aligned}
  p_i^k = \epsilon^k/2 &\Leftrightarrow 
        \text{lower bound of } \Phi_i^{\epsilon}(z^k) \text{ is active, and }
        u_{L,i}^{\Phi,k} \geq 0, \\
  p_i^k = 0 &\Leftrightarrow
        \text{upper bound of } \Phi_i^{\epsilon}(z^k) \text{ is active, and }
        u_{U,i}^{\Phi,k} \geq 0, \\
  u^{\Phi,k} &= u_L^{\Phi,k}-u_U^{\Phi,k}.
\end{aligned}
\end{equation}
In view of the relation between their multipliers, the convergence results established for BA in Theorems \ref{thm:cstatba} and \ref{thm:mstatba} also hold for MLF, by replacing $\bar u^{\Phi}$ with $\bar u_L^{\Phi}-\bar u_U^{\Phi}$. In particular,
making this substitution in (\ref{wmultba})
gives the MPCC multipliers at a limit point $\bar z$ of MLF:
\begin{equation} \label{wmultmlf}
\begin{aligned}
  \bar \lambda^g &= \bar u^g = \lim_{k \to \infty} u^{g,k}, \\
  \bar \lambda^h &= \bar u^h = \lim_{k \to \infty} u^{h,k}, \\
  \bar \lambda_i^G &= \left\{
    \begin{aligned}
      &\bar u^{\Phi}_{L,i} - \bar u_{U,i}^{\Phi}
       = \lim_{k \to \infty} (u_{L,i}^{\Phi,k} - u_{U,i}^{\Phi,k}),
      &&i \in \alpha(\bar z)\\ 
      &(\bar u^{\Phi}_{L,i} - \bar u_{U,i}^{\Phi}) \theta_i,
      &&i \in \beta(\bar z) ,
    \end{aligned}
  \right. \\
  \bar \lambda_i^H &= \left\{
    \begin{aligned}
      &\bar u^{\Phi}_{L,i} - \bar u_{U,i}^{\Phi}
       = \lim_{k \to \infty} (u_{L,i}^{\Phi,k} - u_{U,i}^{\Phi,k}),
      &&i \in \gamma(\bar z)\\
      &(\bar u^{\Phi}_{L,i} - \bar u_{U,i}^{\Phi}) (1-\theta_i),
      &&i \in \beta(\bar z) .
    \end{aligned}
  \right.
\end{aligned}
\end{equation}

Numerical experience demonstrates that when $\bar z$  is not S-stationary, namely,
there exists a subset
\begin{equation} \label{multomega}
  \Omega \subseteq \beta(\bar z), \text{ such that }
\bar\lambda_{\Omega}^G, \bar\lambda_{\Omega}^H \leq 0,
\end{equation}
a sequence $\{z^k\}$ generated by MLF converges to $\bar z$ from the upper 
bounds of the constraints $-\epsilon^k/2 \leq \Phi_{\Omega}^{\epsilon}(z)
\leq 0$, thus $u_{L,\Omega}^{\Phi,k} - u_{U,\Omega}^{\Phi,k} <0$ for all $k$ sufficiently large,
and $(\bar u_{L,\Omega}^{\Phi} - \bar u_{U,\Omega}^{\Phi})
< 0$ in the limit.    
In parallel with this observation,
a sequence $\{z^k\}$ generated by
BA converges to $\bar z$ with
the parameters $p_{\Omega}^k = 0$ for the
constraints $\Phi_{\Omega}^{\epsilon}(z) + p_{\Omega}^k =0$, 
thus the corresponding multipliers $u_{\Omega}^{\Phi,k} <0$ (as implied by
(\ref{relation})), and $\bar u_{\Omega}^{\Phi} <0$ in
the limit.

Behind these observations lie fundamental reasons that explain the
behavior of MLF and BA when approaching a 
non-strongly stationary point.
The following explanation takes advantage of the MLF formulation; but
it also 
addresses the interpretation of BA because of the relation
(\ref{relation}) between these two methods.  

At a feasible point $z$ of MLF($\epsilon^k$), define the index sets
\begin{equation*}
\begin{aligned}
  I^{\Phi}_L(z) = \{i \,|\, \Phi_i^{\epsilon}(z) &= -\epsilon^k/2\}, \\
  I^{\Phi}_U(z) = \{i \,|\, \Phi_i^{\epsilon}(z) &= 0\}.
\end{aligned}
\end{equation*}
The constraint $- \epsilon^k/2 \leq \Phi_i^{\epsilon}(z) \leq 0$
requires that 
\begin{equation*}
\begin{aligned}
  (G_i(z)+\tfrac{\epsilon^k}{2})(H_i(z)+\tfrac{\epsilon^k}{2})
  &\geq (\tfrac{\epsilon^k}{2})^2, \\
  G_i(z)H_i(z) &\leq (\tfrac{\epsilon^k}{2})^2,
\end{aligned}
\end{equation*}
and at the lower and upper bounds we have
\begin{equation*}
\begin{aligned}
  G_i(z)+\tfrac{\epsilon^k}{2} > 0, \,
  H_i(z)+\tfrac{\epsilon^k}{2} > 0, \,
  (G_i(z)+\tfrac{\epsilon^k}{2})(H_i(z)+\tfrac{\epsilon^k}{2}) =
  (\tfrac{\epsilon^k}{2})^2, \; 
  &\forall i \in I^{\Phi}_L(z),\\
  G_i(z) > 0, \, H_i(z) > 0, \,
  G_i(z)H_i(z) = (\tfrac{\epsilon^k}{2})^2, \;
  &\forall i \in I^{\Phi}_U(z).
\end{aligned}
\end{equation*}
Therefore, the feasible region of MLF($\epsilon^k$)
includes the feasible region of MPCC (\ref{mpcc}); also, it restricts the
feasible region of RNLP (\ref{rnlp}) from above by 
enforcing $\Phi_i^{\epsilon}(z) \leq 0$, and extends the 
region a little below by
using the relaxed lower bounds $\Phi_i^{\epsilon}(z) \geq
-\epsilon^k/2$ to allow for small perturbations
$G_i(z) <0$ or $H_i(z) <0$.
Suppose that there exists a subset $\Omega \subseteq \{1, \dots, m\}$,
such that
RNLP (\ref{rnlp}) is minimized at $G_{\Omega}(z) >0$ and $H_{\Omega}(z) >0$.
As the solutions of the
RNLP locate outside of the feasible region of the MPCC, no local
minimizer of the MPCC can be S-stationary.
In such circumstance,
the RNLP constrained additionally
by $\Phi_{\Omega}^{\epsilon}(z) \leq 0$ achieves the minimal cost on the
boundaries of $\Phi_{\Omega}^{\epsilon}(z) \leq 0$
for every $\epsilon^k >0$. 
For MLF($\epsilon^k$), it may have the same minimizer as the
additionally constrained RNLP, or have a better solution on the lower
bound of $\Phi_i^{\epsilon}(z)$ for some $i \in
\Omega$ and every $\epsilon^k >0$ suitably small.
In the latter case, for those $i \in \Omega$, we have that in the
limit  $\bar 
u_{L,i}^{\Phi}-\bar u_{U,i}^{\Phi} \geq 0$ and therefore $\bar
\lambda_i^G,\bar \lambda_i^H \geq 0$ (as
indicated by (\ref{wmultmlf})),
which contradicts the assumption that
RNLP is minimized at $G_i(z), H_i(z) >0$.
Therefore, for every
$\epsilon^k >0$ suitably small, a local minimizer
of MLF($\epsilon^k$) is also a local minimizer of the RNLP constrained
additionally by $\Phi_{\Omega}^{\epsilon}(z) \leq 0$.
This gives rise to the phenomenon that
the upper bounds of 
the constraints $-\epsilon^k/2 \leq 
\Phi_{\Omega}^{\epsilon}(z) \leq 0$ are active at every $z^k$
as $\epsilon^k \to 0$.
Moreover, we have $\Omega \subseteq \beta(\bar z)$ because the constantly
active upper bounds as $\epsilon^k \to 0$ means that $G_{\Omega}(z^k) >0,
H_{\Omega}(z^k) >0$, and $G_{\Omega}(z^k) H_{\Omega}(z^k)
= (\epsilon^k/2)^2$ (componentwise product) for infinitely many $k$.

The above interpretation of the algorithms behavior may help to reduce
the cost in 
verifying whether a limit point is B-stationary.
Recall that, under MPCC-GCQ, the B-stationarity condition (\ref{bstat-d})
is equivalent to (\ref{bstat-pw}), which is the same as
\begin{equation*}\label{bstat-lin}
\nabla f(\bar z)^T d \geq 0, \quad\forall d \in \mathcal T_{\rm
  MPCC}^{\rm lin}(\bar z),
\end{equation*}
and is therefore
equivalent to that $d=0$ is the global minimizer of the following
linear program with complementarity constraints (LPCC):
\begin{equation*}\label{LPCC}
\begin{aligned}
  \min \quad &\nabla f(\bar z)^T d \\
  {\rm s.t.} \quad &\nabla g_{I}(\bar z)^T d \leq 0,\\
  ~ &\nabla h(\bar z)^T d = 0,\\
  ~ &\nabla G_{\alpha}(\bar z)^Td = 0,\\
  ~ &\nabla H_{\gamma}(\bar z)^Td = 0,\\
  ~ &0 \leq \nabla G_{\beta}(\bar z)^Td \perp 
         \nabla H_{\beta}(\bar z)^Td \geq 0 .
\end{aligned}
\end{equation*}
This problem is a combination of classic linear programs
each defined on a partition $(\beta_1, \beta_2) \in \mathcal
P(\beta(\bar z))$ as follows: 
\begin{equation}\label{primal}
\begin{aligned}
  {\rm LP}_{(\beta_1,\beta_2)}:
  \quad \min \quad &{\rm obj}(d) = \nabla f(\bar z)^T d \\
  {\rm s.t.} \quad &\nabla g_{I}(\bar z)^T d \leq 0,\\
  ~ &\nabla h(\bar z)^T d = 0,\\
  ~ &\nabla G_{\alpha}(\bar z)^Td = 0,\\
  ~ &\nabla H_{\gamma}(\bar z)^Td = 0,\\
  ~ &\nabla G_{\beta_1}(\bar z)^Td = 0, \quad
  \nabla H_{\beta_1}(\bar z)^Td \geq 0,\\
  ~ &\nabla G_{\beta_2}(\bar z)^Td \geq 0, \quad
  \nabla H_{\beta_2}(\bar z)^Td = 0.
\end{aligned}
\end{equation}
Consider a limit point $\bar z$ of BA or
MLF, at which there exists a subset
\begin{equation*}
\Omega \subseteq \beta(\bar z),
\text{ such that }
\bar u_{\Omega}^{\Phi} <0 \text{ (BA) and }
\bar u_{L,\Omega}^{\Phi}-\bar u_{U,\Omega}^{\Phi} <0 \text{ (MLF)}.
\end{equation*}
According to (\ref{wmultba}) and (\ref{wmultmlf}),
the MPCC multipliers have the property (\ref{multomega}).
Theorem \ref{thm:suf} states that B-stationarity can be obtained from
piecewise 
M-stationarity.
Since the above discussion has shown that such subset $\Omega$
usually signifies the absence of a S-stationary
solution,
piecewise M-stationarity can be satisfied by the multipliers
(\ref{multomega}) only if, for every partition
$(\beta_1,\beta_2) \in \mathcal P(\beta(\bar z))$, there are
multipliers satisfying
\begin{equation} \label{multomegamstat}
\begin{aligned}
  &\bar\lambda_i^G <0, \;\bar\lambda_i^H =0,
  &&\forall i \in \beta_1 \cap \Omega,\\
  &\bar\lambda_i^G =0, \;\bar\lambda_i^H <0,
  &&\forall  i \in \beta_2 \cap \Omega.
\end{aligned}
\end{equation}
If this is the case, every LP$_{(\beta_1,
  \beta_2)}$ in (\ref{primal}) has the
same solution as
\begin{equation} \label{simlp}
\begin{aligned}
  \min \quad &{\rm obj}(d) = \nabla f(\bar z)^T d \\
  {\rm s.t.} \quad &\nabla g_{I}(\bar z)^T d \leq 0,\\
  ~ &\nabla h(\bar z)^T d = 0,\\
  ~ &\nabla G_{\alpha}(\bar z)^Td = 0,\\
  ~ &\nabla H_{\gamma}(\bar z)^Td = 0,\\
  ~ &\nabla G_{\beta_1}(\bar z)^Td = 0, 
    &&\nabla H_{\beta_1 \setminus \Omega}(\bar z)^Td \geq 0, \\
  ~ &\nabla G_{\beta_2 \setminus \Omega}(\bar z)^Td \geq 0, 
    &&\nabla H_{\beta_2}(\bar z)^Td = 0.
\end{aligned}
\end{equation}
In (\ref{simlp}), the constraints corresponding to the subset $\Omega$ are excluded
from the inequality constraints, 
because (\ref{multomegamstat})
implies that 
the constraints corresponding to $\bar\lambda_i^H$ for all $i \in
\beta_1 \cap \Omega$, and corresponding to $\bar\lambda_i^G$ for all
$i \in \beta_2 \cap \Omega$, must be locally inactive.
As a result, only the constraints indicated in (\ref{simlp}) need to be considered 
to check whether these LPs are solved by $d=0$ or not. This 
can lead to dealing with fewer inequalities than in the original LPCC.

\section{Practical Issues} \label{sec:ncpreg}

We take a closer look at the behavior of the NCP-based bounding
methods (BA and MLF) and the typical regularization scheme proposed in
\cite{reg}, when
converging to a limit point $\bar z$ that is not
S-stationary. 
This reveals an advantage of the complementarity reformulations based
on the smoothed NCP function.

\subsection{Unbounded NLP Multipliers and Inaccurate
  Solution} \label{sec:reg}

In the course of seeking a solution of an MPCC, NLP subproblems
may encounter unbounded multipliers
when approaching a limit
point that is not S-stationary.
Our numerical experience to date indicates that the NCP-based
reformulations BA($\epsilon$) and MLF($\epsilon$) avoid 
unbounded NLP multipliers (even if MPCC-MFCQ fails at a limit point). The following confirms this observation, by
comparing these two methods with the typical regularization scheme proposed in \cite{reg}:
\begin{equation*}
\begin{aligned}
  {\rm REG}(\epsilon): \quad
  \min \quad & f(z) &{\rm multipliers}\\
  {\rm s.t.} \quad
    &g(z) \leq 0, &v^g\\
  ~ &h(z) = 0, &v^h\\
  ~ &G(z) \geq 0, &v^G\\
  ~ &H(z) \geq 0, &v^H\\
  ~ &G_i(z)H_i(z) \leq \epsilon, \; i= 1, \dots, m. &v^{REG}_i
\end{aligned}
\end{equation*}
Solving a sequence of problems REG($\epsilon^k$) with the positive 
scalars $\epsilon^k \to 0$, generates a sequence $\{z^k\} \to \bar z$.
Based on stationarity of $z^k$ for REG($\epsilon^k$), namely,
\begin{equation*} \label{kktreg}
\begin{aligned}
 0 = &\nabla f(z^k) 
   + \sum_{i \in I_g(z^k)} v_i^{g,k} \nabla g_i(z^k) 
   + \sum_{i \in I_h(z^k)} v_i^{h,k} \nabla h_i(z^k) \\
   &- \sum_{i =1}^{m} v^{G,k}_i \nabla G_i(z^k)
   - \sum_{i =1}^{m} v^{H,k}_i \nabla H_i(z^k) \\
   &+ \sum_{i =1}^{m} v^{REG,k}_i \left[ H_i(z^k) \nabla G_i(z^k)+G_i(z^k)
   \nabla H_i(z^k) \right] 
\end{aligned}
\end{equation*}
with $v^{g,k},
v^{G,k},v^{H,k},v^{REG,k} \geq 0$,
the relations
between the NLP multipliers $v^k = (v^{g,k},v^{h,k},
v^{G,k},v^{H,k},v^{REG,k})$ at $z^k$ and the MPCC multipliers $\bar \lambda =
(\bar \lambda^g, \bar \lambda^h, \bar \lambda^G, \bar \lambda^H)$
at $\bar z$ can be expressed by (see also \cite[Eq.(6) and Theorem
3.1]{reg})
\begin{equation} \label{wmultreg}
\begin{aligned}
  \bar \lambda^g &= \bar v^g = \lim_{k \to \infty} v^{g,k}, \\
  \bar \lambda^h &= \bar v^h = \lim_{k \to \infty} v^{h,k}, \\
  \bar \lambda_i^G &= \lim_{k \to \infty}
       \left[ v_i^{G,k} - v_i^{REG,k} H_i(z^k) \right], \; i = 1,\dots,m,\\
  \bar \lambda_i^H &= \lim_{k \to \infty}
       \left[ v_i^{H,k} - v_i^{REG,k} G_i(z^k) \right], \; i = 1,\dots,m.
\end{aligned}
\end{equation}
It has been proved that $\bar z$ is S-stationary for MPCC (\ref{mpcc})
if and only if it is a stationary point of REG(0)
\cite[Proposition 4.1]{sqp}, provided that bounded multipliers exist.

Consider the cases where $\bar z$ is not S-stationary.
In the case where $\bar z$ is no better than C-stationary,
there exist 
indices $i \in \beta(\bar z)$ such that 
$\bar \lambda_i^G <0, \bar \lambda_i^H <0$. According to 
(\ref{wmultreg}), the NLP multipliers $v_i^{G,k}$ and
$v_i^{H,k}$ will have a tendency to be less than zero for $k$ sufficiently
large, which are not allowed in REG($\epsilon^k$). 
Since
\begin{equation} \label{reg-mpcc}
\begin{aligned}
  \lim_{k \to \infty} v_i^{G,k}
  &= \bar\lambda_i^G + \lim_{k \to \infty}v_i^{REG,k} H_i(z^k),\\
  \lim_{k \to \infty} v_i^{H,k}
  &= \bar\lambda_i^H + \lim_{k \to \infty}v_i^{REG,k} G_i(z^k),
\end{aligned}
\end{equation}
the multipliers $v_i^{REG,k}$ must become very large to ensure $v_i^{G,k}$
and $v_i^{H,k}$ remain nonnegative. At the same time, $G_i(z^k)$ and $H_i(z^k)$
are prevented from being very close to zero, otherwise 
$v_i^{REG,k}G_i(z^k)$ and $v_i^{REG,k}H_i(z^k)$ would be ineffective. As a
consequence, it can be observed for $k$ sufficiently large that $v_i^{G,k} =
0, v_i^{H,k} =0,
v_i^{REG,k} \to \infty$, and $G_i(z^k)$ and $H_i(z^k)$ cannot converge
accurately to zero. 

In the case that $\bar z$ is no better than M-stationary,
there exist indices $i \in \beta(\bar z)$ such that 
$\bar\lambda_i^G = 0, \bar\lambda_i^H < 0$ (or the reverse).
The relations (\ref{wmultreg}) imply that for $k$ sufficiently large
$v_i^{H,k}$ has a tendency to be less than zero,
which is not a suitable NLP multiplier. 
We also use (\ref{reg-mpcc}) to predict the behavior of the
REG method. In order to
enforce $v_i^{H,k}$ nonnegative, the multipliers $v_i^{REG,k}$ become
very large, and at the same time,
$G_i(z^k)$ cannot be very close to zero.
The components $H_i(z^k)$ cannot approach zero quickly either, because
the constraints $G_i(z^k)H_i(z^k) \leq \epsilon^k$ must be kept active
for every $\epsilon^k >0$. As a result, the observation for $k$ sufficiently
large would be the same as the above C-stationary case.

On the other hand, the multipliers for the problems BA($\epsilon^k$)
and MLF($\epsilon^k$) do not have this difficulty. As indicated by the
relations (\ref{wmultba}) and (\ref{wmultmlf}), there is no
contradiction between the signs of the MPCC multipliers
$\bar\lambda_i^G,\bar\lambda_i^H$ and of the NLP multipliers
$u_i^{\Phi,k}$ and $u_{L,i}^{\Phi,k}-u_{U,i}^{\Phi,k}$.
In addition, the underlying relation
\begin{equation} \label{underlying2}
\bar\lambda_i^G+\bar\lambda_i^H = \bar u_i^{\Phi} = \bar
u_{L,i}^{\Phi}-\bar u_{U,i}^{\Phi}, \quad\forall i \in \beta(\bar z)
\end{equation}
indicates that the NLP multipliers exist whenever the MPCC multipliers
do. Therefore, whether $\bar z$ is S-stationary or not has little
influence on the performance of BA and MLF methods, which is
an important difference from the REG method.

\subsection{Examples: MPCC Methods Comparison}

The following examples illustrate the difference in behavior between 
the NCP-based bounding methods and the REG regularization
method when approaching a non-strongly stationary solution.

As noted earlier, the problem \emph{scholtes4} has a B-stationary
point that is not strongly stationary. Another example is
problem \emph{ex9.2.2} from
the MacMPEC collection \cite{macmpec}, which is given by
\begin{equation*}
\begin{aligned}
  \min \quad &x^2 + (y-10)^2 &{\rm multipliers}\\
  {\rm s.t.} \quad  
     &x \leq 15, &{\rm (inactive)}\\
  ~  & -x + y \leq 0, &\lambda_1\\
  ~  &-x \leq 0, &{\rm (inactive)}\\
  ~  & x + y + s_1 = 20, &\lambda_2\\
  ~  &-y + s_2 = 0, &\lambda_3\\
  ~  & y + s_3 = 20,  &\lambda_4\\
  ~  & 2x + 4y + l_1 -l_2 + l_3 = 60, &\lambda_5\\
  ~  & 0 \leq s_i \perp l_i \geq 0, \; i = 1, 2, 3.
  &\sigma^{si},\sigma^{li} 
\end{aligned}
\end{equation*}
A local minimizer is $\bar z = (\bar x,\bar y,\bar s, \bar l)$ with
\begin{equation*}
  \bar x = 10,\;
  \bar y = 10, \;
  \bar s = (0,10,10), \;
  \bar l = (0,0,0).
\end{equation*}
The weak stationarity conditions at $\bar z$ require that
\begin{equation*} \label{ex922kkt}
\begin{aligned}
  2\bar x -\lambda_1+\lambda_2+2\lambda_5 &=0, \\
  2(\bar y-10)
  +\lambda_1+\lambda_2-\lambda_3+\lambda_4+4\lambda_5 &=0, \\
  \lambda_2-\sigma^{s1} &=0, \\
  \lambda_3 &=0, \\
  \lambda_4 &=0, \\
  \lambda_5-\sigma^{l1} &=0, \\
  -\lambda_5-\sigma^{l2} &=0, \\
  \lambda_5 - \sigma^{l3} &=0,
\end{aligned}
\end{equation*}
which implies
\begin{equation*} \label{sigma1sigma2}
\begin{aligned}
  \sigma^{s1} &= -3\lambda_5-10, \\
  \sigma^{l1} &= \lambda_5,
\end{aligned}
\end{equation*}
and therefore, $\sigma^{s1},\sigma^{l1}$ cannot be both
nonnegative.
Let $\sigma^{s1}$
or $\sigma^{l1}$ be zero, then we obtain multipliers with
$(\sigma^{s1}, \sigma^{l1}) = (0,-10/3)$ or $(\sigma^{s1},
\sigma^{l1}) = (-10,0)$, indicating that $\bar z$ is piecewise
M-stationary.

Numerical results of examples \emph{scholtes4} and
\emph{ex9.2.2} are presented in Tables
\ref{tab:scholtes4} and \ref{tab:ex922}, respectively. 
Note that we can produce MPCC-MFCQ failure in these examples simply by introducing an additional constraint, for instance, $z_1 \geq 0$ or $z_1 \leq 0$ (to \emph{scholtes4}), and $s_1 \geq 0$ or $s_1 \leq 0$ (to \emph{ex9.2.2}); for the resulting problems we still get the same results as in these tables.
The results indicate that REG method gives rise to large NLP multipliers
for the constraints corresponding to the biactive complementary
components, and the multipliers get even larger when the regularization
parameter 
$\epsilon$ becomes smaller. At the same time, the convergence is slow
and inaccurate, compared to the magnitude of $\epsilon$.
In contrast, the multipliers of the NCP-based bounding methods 
are well behaved. 
According to (\ref{underlying2}),
their multipliers
can be used to derive the MPCC multipliers at a limit point and vice versa.
In addition, the accuracy of their solutions (the problem
variables and multipliers) is comparable to $\epsilon$.

\begin{table}[tbhp]
\footnotesize
  \caption{Results of problem scholtes4.}
  \label{tab:scholtes4}
\begin{center}
\begin{tabular}{| c | c | c c | c c | c c c |} \hline
  {$\epsilon$}
              & {scholtes4}
              & \multicolumn{2}{c|}{BA}
              & \multicolumn{2}{c|}{MLF}
              & \multicolumn{3}{c|}{REG}          
  \\ \hline
  
     ~ & ~ 
       & $p$  & $u^{\Phi}$            
       & $u_L^{\Phi}$ & $u_U^{\Phi}$
       & $v^{z_1}$ & $v^{z_2}$ & $v^{REG}$
  \\ 
     ~ & ~
       &0     &-2    
       &0     &2
       &0 &0  &1.00E+3
  \\ \cline{2-9}
  $10^{-6}$
       &$z_1$
       &\multicolumn{2}{c|}{5E-7}
       &\multicolumn{2}{c|}{5E-7}
       &\multicolumn{3}{c|}{0.001000}
  \\ 
     ~ &$z_2$
       &\multicolumn{2}{c|}{5E-7}
       &\multicolumn{2}{c|}{5E-7}
       &\multicolumn{3}{c|}{0.001000}
  \\ 
     ~ &$z_3$
       &\multicolumn{2}{c|}{2E-6}
       &\multicolumn{2}{c|}{2E-6}
       &\multicolumn{3}{c|}{0.003999}
  \\ \hline
  
            ~ & ~ 
              & $p$  & $u^{\Phi}$             
              & $u_L^{\Phi}$ & $u_U^{\Phi}$
              & $v^{z_1}$ & $v^{z_2}$ & $v^{REG}$
  \\ 
            ~ & ~
              &0     &-2    
              &0     &2
              &0 &0  &2.69E+4
  \\ \cline{2-9}
    $10^{-9}$  &$z_1$
              &\multicolumn{2}{c|}{5E-10}
              &\multicolumn{2}{c|}{5E-10}
              &\multicolumn{3}{c|}{0.000037}
  \\ 
            ~ &$z_2$
              &\multicolumn{2}{c|}{5E-10}
              &\multicolumn{2}{c|}{5E-10}
              &\multicolumn{3}{c|}{0.000037}
  \\ 
            ~ &$z_3$
              &\multicolumn{2}{c|}{2E-9}
              &\multicolumn{2}{c|}{2E-9}
              &\multicolumn{3}{c|}{0.000149}
  \\ \hline
  
     ~ & ~ 
       & $p$  & $u^{\Phi}$             
       & $u_L^{\Phi}$ & $u_U^{\Phi}$
       & $v^{z_1}$ & $v^{z_2}$ & $v^{REG}$
  \\ 
     ~ & ~
       &0     &-2    
       &0     &2
       &0 &0  &5.02E+4
  \\ \cline{2-9}
  $10^{-12}$
       &$z_1$
       &\multicolumn{2}{c|}{5E-11}
       &\multicolumn{2}{c|}{5E-11}
       &\multicolumn{3}{c|}{0.000020}
  \\ 
     ~ &$z_2$
       &\multicolumn{2}{c|}{5E-11}
       &\multicolumn{2}{c|}{5E-11}
       &\multicolumn{3}{c|}{0.000020}
  \\ 
     ~ &$z_3$
       &\multicolumn{2}{c|}{2E-10}
       &\multicolumn{2}{c|}{2E-10}
       &\multicolumn{3}{c|}{0.000080}
  \\ \hline
\end{tabular}
\end{center}
\end{table}

\begin{table}[tbhp]
\footnotesize
  \caption{Results of problem ex9.2.2.}
  \label{tab:ex922}
\begin{center}
\begin{tabular}{| c | c | c c | c c | c c c |} \hline
  {$\epsilon$}
              & {ex9.2.2}
              & \multicolumn{2}{c|}{BA}
              & \multicolumn{2}{c|}{MLF}
              & \multicolumn{3}{c|}{REG}          
  \\ \hline
  
     ~ & ~ 
       & $p$  & $u^{\Phi}$             
       & $u_L^{\Phi}$ & $u_U^{\Phi}$
       & $v^{s_1}$ & $v^{l_1}$ & $v^{REG}$
  \\ 
     ~ & ~
       &0      &-5.74
       &0      &5.74
       &0 &0   &2.89E+3
  \\ \cline{2-9}
  $10^{-6}$
       &$s_1$
       &\multicolumn{2}{c|}{3.8E-7}
       &\multicolumn{2}{c|}{3.8E-7}
       &\multicolumn{3}{c|}{0.000577}
  \\ 
     ~ &$l_1$
       &\multicolumn{2}{c|}{6.5E-7}
       &\multicolumn{2}{c|}{6.5E-7}
       &\multicolumn{3}{c|}{0.001732}
  \\ \hline
  
            ~ & ~ 
              & $p$  & $u^{\Phi}$             
              & $u_L^{\Phi}$ & $u_U^{\Phi}$
              & $v^{s_1}$ & $v^{l_1}$ & $v^{REG}$
  \\ 
            ~ & ~
              &0      &-4.78 
              &0      &5.63
              &0 &0   &7.85E+4
  \\ \cline{2-9}
    $10^{-9}$  &$s_1$
              &\multicolumn{2}{c|}{2.04E-10}
              &\multicolumn{2}{c|}{3.65E-10}
              &\multicolumn{3}{c|}{0.000021}
  \\ 
            ~ &$l_1$
              &\multicolumn{2}{c|}{1.11E-10}
              &\multicolumn{2}{c|}{5.96E-10}
              &\multicolumn{3}{c|}{0.000064}
  \\ \hline

     ~ & ~ 
       & $p$  & $u^{\Phi}$             
       & $u_L^{\Phi}$ & $u_U^{\Phi}$
       & $v^{s_1}$ & $v^{l_1}$ & $v^{REG}$
  \\ 
     ~ & ~
       &0      &-9.94 
       &0      &3.34
       &0 &0   &1.46E+5
  \\ \cline{2-9}
  $10^{-12}$
       &$s_1$
       &\multicolumn{2}{c|}{2.94E-11}
       &\multicolumn{2}{c|}{2.03E-11}
       &\multicolumn{3}{c|}{0.000011}
  \\ 
     ~ &$l_1$
       &\multicolumn{2}{c|}{3.81E-11}
       &\multicolumn{2}{c|}{1.09E-11}
       &\multicolumn{3}{c|}{0.000034}
  \\ \hline
\end{tabular}
\end{center}
\end{table}

More numerical results of the methods BA, MLF, and REG can be found in \cite{bounding}. In that study, we considered a selection of problems 
from the MacMPEC collection \cite{macmpec}, which have solutions with biactive
complementarity constraints, as well as 
seven MPCC problems drawn from distillation models with up to 1264 variables and 48 complementarity constraints. 
Based on our numerical experience so far, we summarize features of
these MPCC methods as follows.
On the other hand, a thorough numerical study is beyond the current scope and a topic for future work.

\begin{itemize}
\item
  REG: When approaching a S-stationary solution, REG
  converges quickly and accurately; the accuracy of the solution
  does not depend on the accuracy of $\epsilon$, in other words,
  an accurate solution can be obtained even if $\epsilon$ is not so small.
  On the other hand, when approaching a non-strongly stationary
  solution, the convergence can be slow
  and inaccurate, compared to the magnitude of $\epsilon$; unbounded NLP multipliers usually arise.

\item
  BA/MLF: Whether it is approaching a S-stationary or a non-strongly
  stationary solution, performance of the method does not vary much. In particular, BA/MLF is not as efficient as REG in the former case; but in the latter case, it is still able to obtain $O(\epsilon)$-accurate solutions and has well-behaved NLP multipliers . 

\item
  To see the influence of singularity and regularization (because of
  $\Phi^{\epsilon}$) on BA/MLF, we
  repeat the loop for solving MPCCs to a very small $\epsilon$ and use a solver, IPOPT \cite{ipopt}, based on a second-order algorithm.
  In the case of approaching a S-stationary point, 
  the regularization of the Lagrangian Hessian
  is mild and does not 
  happen often, which may attribute to the positive parameter $p>0$
  (see the Hessian expression (\ref{pdphi}), and note that in
  S-stationarity convergence $G_i(z^k)$ and  $H_i(z^k)$ tend to zero
  quickly for every $i \in \beta(\bar z)$ and the parameter
  $p_i^k>0$ as implied by (\ref{relation})).
  In the case of approaching a non-strongly stationary point,
  the Hessian regularization is rarely observed. The 
  reason is that, as we discussed in Section 
  \ref{sec:mlf}, the sequence $\{z^k\}$ converges to $\bar z$ with
  $G_i(z^k),  H_i(z^k)>0$ for those indices $i$ such that  RNLP
  (\ref{rnlp}) approaches its minimum with
  $G_i(z^k)>0$ and $H_i(z^k)>0$, and these positive
  $G_i(z^k)$ and $H_i(z^k)$ do not cause numerical difficulties. 

\end{itemize}


\end{document}